\newtheorem{theorem}{Theorem}
\newtheorem{lemma}[theorem]{Lemma}
\newtheorem{corollary}[theorem]{Corollary}
\newcommand{\be}{\begin{equation}}
\newcommand{\ee}{\end{equation}}
\newcommand{\Lu}{\mathcal{L}(u,\chi_{D})}
\newcommand{\Lubar}{\mathcal{L}(\overline{u},\chi_{D})}
\newcommand{\Lubarzero}{\mathcal{L}(\overline{u}_0,\chi_{D})}
\newcommand{\Luzero}{\mathcal{L}(u_0,\chi_{D})}
\newcommand{\Fu}{\mathcal{F}(u,\chi_{D})}
\newcommand{\Fuzero}{\mathcal{F}(u_0,\chi_{D})}
\newcommand{\FKu}{\mathcal{F}_K(u,\chi_{D})}
\newcommand{\PKu}{ P_K(u,\chi_{D})}
\newcommand{\ZKu}{ Z_K(u,\chi_{D})}
\newcommand{\PKubar}{ P_K(\overline{u},\chi_{D})}
\renewcommand{\a}{\alpha}
\renewcommand{\th}{\theta} 
\newcommand{\s}{\sigma}
\newcommand{\e}{\epsilon}
\renewcommand{\Re}{\operatorname{Re}}
\renewcommand{\Im}{\operatorname{Im}}
\newcommand\cyr{%
\renewcommand\rmdefault{wncyr}%
\renewcommand\sfdefault{wncyss}%
\renewcommand\encodingdefault{OT2}%
\normalfont
\selectfont}
\DeclareTextFontCommand{\textcyr}{\cyr}
\title[Truncated Product Representations for $L$-Functions]
{Truncated Product Representations for $L$-Functions in the Hyperelliptic Ensemble}
 \author{J. C. Andrade, S. M. Gonek, and J. P. Keating}
\date{September 9, 2016}
\address{Department of Mathematics, University of Exeter, North Park Road, 
Exeter EX4 4QF, UK}
\email{j.c.andrade@exeter.ac.uk}
\address{Department of Mathematics, University of Rochester, Rochester, NY 14627, USA}
\email{gonek@math.rochester.edu}
\address{School of Mathematics, University of Bristol, Bristol BS8 1TW, UK}
\email{j.p.keating@bristol.ac.uk}
 \thanks{JCA would like to thank the Department of Mathematics at the University of Rochester for hospitality during a visit where this work started. SMG was supported in part by National Science Foundation Grant  DMS-1200582. JPK gratefully acknowledges support under EPSRC Programme Grant EP/K034383/1 (LMF: L-Functions and Modular Forms) and a Royal Society Wolfson Research 
Merit Award. The authors also would like to thank the American Institute of Mathematics (AIM) where this work was completed.}
 \subjclass[2010]{Primary 11G20, Secondary 14G10, 11M50}
 \keywords{hybrid formula, hyperelliptic curve, finite fields, function fields, random matrix theory, zeros of L-functions.}
\begin{document} 

 \maketitle

\begin{abstract} 
We investigate the approximation of quadratic Dirichlet $L$-functions  over function fields by 
truncations of their Euler products. We first establish   representations for such   $L$-functions   
as  products over prime polynomials  times   products over their zeros. This is the hybrid formula 
in function fields.  We then prove that partial Euler products are good approximations  of an 
$L$-function away from its zeros, and that,  when the length  of the product  tends  to infinity, 
we recover the original $L$-function.  We also obtain   explicit expressions for the arguments of   
quadratic Dirichlet  $L$-functions  over function fields and for the arguments of their partial Euler 
products. 
In the second part of the paper we construct, for each quadratic Dirichlet $L$-function over a function field, 
an auxiliary function  based on the approximate functional equation that equals the $L$-function on the 
critical line. We also construct a parametrized family of approximations of these auxiliary functions, prove  the Riemann 
hypothesis holds for them, and that their zeros are related to those of the associated $L$-function. Finally, 
we estimate the counting function for the zeros of this family of approximations,  show that these  zeros cluster 
near those of the associated $L$-function, and that, when the parameter is not too large,  almost all the zeros of the 
approximations are simple.
\end{abstract}



\section{Introduction}

Let $\mathbb{F}_{q}$ be a finite field   with $q$ elements, where $q$ is odd,  and let 
$\mathbb{F}_{q}[x]$ be the polynomial ring over $\mathbb{F}_{q}$ in the variable $x$. 
We denote by $\mathcal H_{2g+1, q}$ the set of monic, square-free polynomials 
$D\in \mathbb{F}_{q}[x]$ of degree $2g+1$. This is the hyperelliptic ensemble of the title. 
Associated with each $D$ is a nontrivial quadratic Dirichlet character $\chi_D$, and a  
quadratic Dirichlet $L$-function,  which  is the same as the Artin $L$-function corresponding to the character $\chi_D$ of $\mathbb{F}_{q}(x)\big(\sqrt{D(x)}\big)$, where $\mathbb{F}_{q}(x)$ is the rational function field over $\mathbb{F}_{q}$.
These  functions will be described more fully in the next section, but in order to explain the contents of this paper, we introduce some 
of the basic notation now.   Excellent general references are Rosen~\cite{Ro} and Thakur~\cite{Th}.  

If $f$ is a nonzero polynomial in $\mathbb{F}_{q}[x]$, we define the norm of  $f$  to be 
$|f| =q^{\mathrm{deg} f}$. If $f=0$, we set $|f|=0$. A monic irreducible polynomial $P$ 
is called a \emph{prime polynomial} or simply a \emph{prime}. 
The   $L$-function corresponding to the quadratic character $\chi_{D}$ is given by the Euler product
\begin{equation}\label{E Prod 1}
L(s,\chi_{D})=\prod_{P\,  \mathrm{prime}  }(1-\chi_{D}(P)|P|^{-s})^{-1} 
\qquad  \Re \, s>1,
\end{equation}
where $s$ is a complex variable.
Multiplying out, we obtain the Dirichlet series representation
\begin{equation}\label{D series 1} 
L(s,\chi_{D})=\sum_{ f\, \mathrm{monic} }  \frac{\chi_{D}(f) }{  |f|^{ s} } \qquad  \Re \, s>1.
\end{equation}
It is often convenient to work with the equivalent functions  written in terms of the variable $u=q^{-s}$, namely, 
\begin{equation}\label{E Prod 2}
\mathcal{L}(u,\chi_{D})=\prod_{ P\,  \mathrm{prime}   }
(1-\chi_{D}(P)u^{\mathrm{deg}P})^{-1} \qquad |u|<1/q  ,
\end{equation}
and
\begin{equation}\label{D Series 2} 
\mathcal{L}(u,\chi_{D})=\sum_{f\,\mathrm{monic} }\chi_{D}(f)u^{\mathrm{deg}f}
\qquad |u|<1/q.
\end{equation}
It turns out that $\Lu$ is actually a polynomial of degree $2g$ (see Rosen~\cite{Ro}, Proposition 4.3), and it satisfies a Riemann hypothesis  (see Weil~\cite{W}), namely,
all its zeros lie on the circle $|u| =q^{-\frac12}$.
 It follows that we may write
\be\label{L fnc roots def}
\Lu =\prod_{j=1}^{2g} (1-\a_j u),
\ee
where  the $\a_j = q^{\frac12}e(-\th_j), \,j=1,2\ldots,2g, $ are the reciprocals of the roots
$u_j=q^{-\frac12}e(\th_j)$ of $\Lu$. (Throughout we write  $e(x)$   to denote $e^{2\pi i x}$.)
In particular, the restriction $|u|<1/q$ in \eqref{D Series 2} (but not in 
\eqref{E Prod 2}) may be deleted.

Now  $\Lu$  satisfies the functional equation  
\be\label{Fnc Eqn} 
\Lu = (qu^2)^g \mathcal{L}(1/qu, \chi_D)
\ee 
and also possesses an ``approximate functional equation'' 
 \be\label{App Fnc Equ}
\mathcal{L}(u,\chi_{D})=
\sum_{\substack{f  \ \mathrm{monic}\\ \mathrm{deg}\, f\leq g}}
 \chi_{D}(f) u^{\deg f}+(qu^2)^{g}\sum_{\substack{f  \ \mathrm{monic}\\ \mathrm{deg}\, f\leq g-1}} \chi_{D}(f ) (qu)^{\deg f} ,
\ee
 which, of course, is exact. The name comes from the analogous formulas in the number field setting 
 which \emph{are}  approximations. For instance, for the Riemann zeta function, a symmetrized version of the formula is (see Titchmarsh~\cite{T})
 \be\label{AFE zeta} \notag
 \zeta(s) = \sum_{n\leq \sqrt{t/2\pi}} n^{-s} +\chi(s) \sum_{n\leq \sqrt{t/2\pi}} n^{s-1} +E(s),
 \ee
where $0<\Re\,s<1, \; t\geq1,$  and $E(s)$ is an error term. The importance of this   formula in applications is that it   consists of two Dirichlet polynomials  of length about 
$\sqrt t$, whereas a more direct  approximation (see Titchmarsh~\cite{T}) would require  a
Dirichlet polynomial  of length  $t$. The factor $\chi(s)$ is from the functional equation $\zeta(s)=\chi(s)\zeta(1-s)$ and is easy to calculate.
Similarly, \eqref{App Fnc Equ} consists of two pieces of length about $g$ as opposed to a   polynomial of length $2g$ (recall \eqref{L fnc roots def}). This is analogous because, in a sense, large $t$ 
in the number field case corresponds to $q^{2g}$.

In ~\cite{G1} and ~\cite{G2} another type of   approximation  of the Riemann zeta function and Dirichlet $L$-functions was constructed. 
It was based on the  approximate functional equation, but used truncations of the  $L$-function's Euler product  rather than its Dirichlet series. It was shown, for example,  that these approximations satisfy a Riemann hypothesis  and are very accurate if one stays away from the zeros of the $L$-function. Moreover, if the Riemann hypothesis holds for the $L$-function, the zeros of the approximations converge to those of the $L$-function as the length of the Euler product  tends to infinity.  This type of approximation has also been considered in the Physics literature; see, for example, ~\cite{BK1} 

Our goal in this paper is to carry out a similar construction and analysis in the case of 
quadratic $L$-functions for the hyperelliptic ensemble over finite function fields. 
An advantage we have in this setting is that the Riemann hypothesis is known to hold for such $L$-functions.
This means that all our results are unconditional.

The contents of the paper fall into two parts.
The first begins in Section 2  where we give some  background on quadratic characters and $L$-functions, and then prove a hybrid formula  for $\Lu$ (Theorem~\ref{L: hybrid form}). By this we mean  a representation of $\Lu$ as a product over prime polynomials times a product over its zeros. 
In Section 3 we prove that partial Euler products $\PKu$ approximate $\Lu$ well inside the disk
$|u| \leq q^{-\frac12}$ when $u$ is not close to any zero $u_j, \ j=1, 2, \ldots, 2g,$ of $\Lu$, and that 
at every point in the disc except the $u_j$'s, $\lim_{K\to\infty}\PKu=\Lu$. In Section 4 we obtain explicit expressions for
$\arg\Lu$ and $\arg\PKu$ and bound their difference. In the following section we reprove a recent estimate for $\arg\Lu$ of Faifman and Rudnick~\cite{FR}, and show that if $K$ is sufficiently large, this bound holds for $\arg\PKu$ as well. We also reprove, in a slightly different way, another result from~\cite{FR},   an  estimate for the counting function $N(\th, \chi_D)$ of the zeros of $\Lu$ on the arc $q^{-\frac12} e(\phi), \, 0\leq \phi\leq \th \leq 1$.

The second part of the paper begins with Section 6. 
We introduce an auxiliary function $\Fu$ modeled on the approximate functional equation \eqref{App Fnc Equ}
which equals $\Lu$ on the all important circle $|u|=q^{-\frac12}$ and has the same zeros as $\Lu$ in the complex plane. In Section 7 we construct a model   $\FKu$ of $\Fu$ built from the truncated Euler products $\PKu$.
We then show that the Riemann hypothesis holds for $\FKu$, that inside 
$|u|\leq q^{-\frac12}$, $\FKu$ approximates $\Fu$  well when $u$ is away from zeros $u_j$ of $\Fu$ and $K$ is large enough, and that in this disk $\lim_{K\to\infty}\FKu=\Fu$ if $u$ is not a $u_j$. Finally, in the eighth section we 
estimate the counting function $N_K(\th, \chi_D)$ of the zeros of $\FKu$, show that the zeros of $\FKu$ cluster around the zeros of $\Fu$ as $K\to\infty$, and show that when $K$ is not too large, almost all the zeros of $\FKu$ are simple.

In this paper, our main interest is when the cardinality $q$ of the ground field $\mathbb{F}_{q}$ is fixed and the genus $g$ gets large, i.e., $\text{deg}(D)\rightarrow\infty$. An interesting question is if the same analysis of this paper can be carried out with $g$ fixed and $q\rightarrow\infty$ and this is material for a future work.    

\section{Background on $L$-functions and a hybrid formula for $\mathcal L(u, \chi_D)$} 
 
For a prime polynomial $P$ and any $f\in \mathbb{F}_{q}[x]$, the quadratic residue symbol $\displaystyle \Big(\frac f P\Big)$ is defined by 
$$
 \Big(\frac{f}{P}\Big)=\left\{
\begin{array}{cl}
0, & \mathrm{if}\ P\mid f,\\
1, & \mathrm{if}\ P\not{|} f \ \mathrm{and} \ f \ \mathrm{is \ a \ square \ modulo} \ P,\\
-1, & \mathrm{if}\ P\not{|} f \ \mathrm{and} \ f \ \mathrm{is \ a \ non\ square \ modulo} \ P.\\
\end{array}
\right.
$$
If $Q=  P_{1}^{e_{1}}P_{2}^{e_{2}}\ldots P_{k}^{e_{k}}$ is the prime factorization of a  monic polynomial $Q \in \mathbb F_q[x] $, the Jacobi symbol is defined as
$$\Big(\frac{f}{Q}\Big)=\prod_{i=1}^{k}\Big(\frac{f}{P_{i}}\Big)^{e_{i}}.$$
If $c\in \mathbb{F}_{q}^{*}$, then
$$
\Big(\frac{c}{Q}\Big)= c^{((q-1)/2) \, {\mathrm deg}\,  Q}.
$$
If $A$ and $B$ in $\in\mathbb{F}_{q}[x]$ are monic  coprime polynomials, the quadratic reciprocity law, proved by E. Artin, says that
$$
\Big(\frac{A}{B}\Big) = \Big(\frac{B}{A}\Big) (-1)^{((q-1)/2){\mathrm{deg}} A\,  {\mathrm{deg}} B}.
$$
This also holds for $A, B$ not coprime as then both sides equal zero.
 
For  $D\in\mathbb{F}_{q}[x]$   monic and  square-free, we define the \textit{quadratic character} $\chi_{D}$ by
\begin{equation}\notag
\chi_{D}(f)=\Big(\frac{D}{f}\Big).
\end{equation}
 
For each such character there corresponds an   $L$-function  
(see \eqref{E Prod 1}-\eqref{L fnc roots def} above)
\begin{equation}\notag
L(s,\chi_{D})=\sum_{f\,  \mathrm{monic} }  \frac{\chi_{D}(f) }{  |f|^{ s} }
=\prod_{P\,  \mathrm{prime} }(1-\chi_{D}(P)|P|^{-s})^{-1} 
\qquad  \Re \, s>1. 
\end{equation}

For each $D$ in  the hyperelliptic ensemble 
$$
\mathcal H_{2g+1, q}= \{\, D\in\mathbb{F}_{q}[x]\,:\, D\,  \hbox {monic and  square-free}, 
\deg\,D=2g+1 \, \},
$$ 
there is an  associated hyperelliptic curve   given in affine form by
$$
C_{D}:  y^{2}=D(x) .
$$
These curves are nonsingular and of genus $g$, and the $L$-function defined above is related to the zeta function of the curve $C_D$ as follows.    
Recall that if $C$ is a smooth, projective, connected curve of genus $g$ over $\mathbb F_q$, its zeta function is defined as
$$
Z_{C}(u) =\exp\bigg( \sum_{r=1}^\infty N_r(C ) \frac{u^r}{r}    \bigg),
$$
where $N_r(C )$ is the number of points on $C$ with coordinates in 
$\mathbb F_{q^r}$ (including the point at infinity). Weil~\cite{W} proved that 
$$
Z_{C }(u) =\frac{P_{C}(u)}{(1-u)(1-qu)},
$$
where $P_{C }(u)$ is a polynomial of degree $2g$, and he proved the Riemann hypothesis for $Z_C(u)$, which states that all the zeros of $P_C(u)$ lie on the 
circle $|u| =q^{-\frac12}$.
In the case of our hyperelliptic curves $C_D$ of odd degree, it turns out that the polynomial 
$P_{C_D}(u)$ is exactly  $\Lu$ (this was first shown by Artin~\cite{A}).
As was mentioned above,  we may therefore write
\be\label{L fnc roots def 2}\notag
\Lu =\prod_{j=1}^{2g} (1-\a_j u) \qquad\quad u\in \mathbb C,
\ee
where  the $\a_j = q^{\frac12}e(-\th_j), \,j=1,2\ldots,2g, $ are the reciprocals of the roots
$u_j=q^{-\frac12}e(\th_j)$ of $\Lu$.

For a monic polynomial $f$ we write $\Lambda(f)= \deg  P$ if $f=P^k$ for some prime   $P$ and positive integer $k$, and $\Lambda(f)= 0$ otherwise. The logarithmic derivative of \eqref{E Prod 2} may then be written
\be\notag
\begin{split}
\frac{\mathcal L^{'}}{\mathcal L}(u, \chi_D) 
=&\sum_{P\, \mathrm{prime}   } 
\frac{(\mathrm{deg}\,P)  \chi_{D}(P)u^{\mathrm{deg}P -1}  }{1-\chi_{D}(P)
u^{\mathrm{deg}P}}\\
  =& \sum_{n=1}^\infty \Bigg(\sum_{\substack {f \,\mathrm{monic} \\ \deg\,f=n} } 
  \Lambda(f)\chi_D(f) \Bigg) u^{n-1}.
  \end{split}
\ee
On the other hand, the logarithmic derivative of \eqref{L fnc roots def} is
\be\notag
\frac{\mathcal L^{'}}{\mathcal L}(u, \chi_D )
= -\sum_{n=1}^\infty \Bigg(\sum_{j=1}^{2g}\a_j^{n}\Bigg)u^{n-1}.
\ee
Equating these two expressions,   
we find that  
\be\label{trace formula}
-\sum_{j=1}^{2g}e(-n\th_j  )= \frac{1}{q^{n/2}} 
\sum_{\substack{f\,\mathrm{monic}\\ \deg f=n} }\chi_D(f)\Lambda(f).
\ee
 

Using this fundamental formula, we prove a version of the hybrid formula of Gonek, Hughes, and Keating~\cite{GHK} (see also ~\cite{BK2} and ~\cite{BK3}) for $\Lu$.

\begin{theorem}[Hybrid formula for $\mathcal L(u, \chi_D)$]\label{L: hybrid form}
Let $K\geq 0$ be an integer and let
\be\label{P def}
P_K(u, \chi_D)=\exp\Bigg( \sum_{k=1}^{ K}\sum_{\substack{f\, {\rm{monic}}  \\ \deg f=k}} 
\frac{\Lambda(f) \chi_D(f) u^{k}}{k}\Bigg),
\ee
where $\Lambda(f)= \deg  P$ if $f=P^n$ for some prime polynomial $P$, and $\Lambda(f)= 0$ otherwise.  
Also set
\be\label{Z def}
Z_K(u, \chi_D)=\exp\bigg(- \sum_{j=1}^{2g} \bigg(\sum_{k>K} \frac{(\a_j u)^{k}}{k} \bigg)\bigg).
\ee
Then for $|u|\leq q^{-1/2}$, 
\be\label{E: hybrid}
\begin{split}
\mathcal L(u, \chi_D)= P_K(u, \chi_D)  \, Z_K(u, \chi_D).
\end{split}
\ee
\end{theorem}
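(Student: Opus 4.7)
The plan is to prove the identity by taking logarithms of both sides and matching the resulting series, with the trace formula \eqref{trace formula} serving as the bridge between the sum over prime polynomials and the sum over inverse zeros. I will first work in the open disk $|u|<q^{-\frac12}$, where every series in sight converges absolutely, and then extend to the boundary.

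First I would write, for $|u|<q^{-1}$ (where the Euler product converges absolutely),
\begin{equation*}
\log \mathcal L(u,\chi_D) \;=\; -\sum_{P\ \mathrm{prime}}\log\bigl(1-\chi_D(P)u^{\deg P}\bigr)
 \;=\;\sum_{k=1}^{\infty}\frac{u^k}{k}\sum_{\substack{f\ \mathrm{monic}\\ \deg f=k}}\Lambda(f)\chi_D(f),
\end{equation*}
after expanding each factor as a geometric series and collecting terms by $\deg(P^m)=k$. Since $\mathcal L(u,\chi_D)$ is a polynomial with all zeros on $|u|=q^{-\frac12}$, this identity between analytic functions persists throughout $|u|<q^{-\frac12}$. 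Next I split this series at $k=K$: the head $\sum_{k=1}^{K}$ is, by definition \eqref{P def}, exactly $\log P_K(u,\chi_D)$.

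For the tail I apply the trace formula \eqref{trace formula}. Since $\a_j=q^{\frac12}e(-\th_j)$, that identity rewrites as
\begin{equation*}
\sum_{\substack{f\ \mathrm{monic}\\ \deg f=k}}\Lambda(f)\chi_D(f)\;=\;-\sum_{j=1}^{2g}\a_j^{k}.
\end{equation*}
Substituting and interchanging the (absolutely convergent) sums gives
\begin{equation*}
\sum_{k>K}\frac{u^k}{k}\sum_{\substack{f\ \mathrm{monic}\\ \deg f=k}}\Lambda(f)\chi_D(f)
\;=\;-\sum_{j=1}^{2g}\sum_{k>K}\frac{(\a_j u)^k}{k}\;=\;\log Z_K(u,\chi_D),
\end{equation*}
by definition \eqref{Z def}. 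Combining, $\log \mathcal L=\log P_K+\log Z_K$, hence \eqref{E: hybrid} on $|u|<q^{-\frac12}$.

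Finally I would extend to the closed disk $|u|\leq q^{-\frac12}$. The function $P_K(u,\chi_D)$ is entire, and $\mathcal L(u,\chi_D)$ is a polynomial, so the rearrangement $Z_K=\mathcal L/P_K$ makes sense away from zeros of $P_K$; alternatively one can invert the logarithmic manipulation using the finite identity $-\sum_{k>K}z^k/k=\log(1-z)+\sum_{k=1}^{K}z^k/k$ for $|z|<1$, which displays $Z_K(u,\chi_D)$ as $\mathcal L(u,\chi_D)\exp\bigl(\sum_j\sum_{k=1}^{K}(\a_j u)^k/k\bigr)$ in $|u|<q^{-\frac12}$, and the latter expression extends continuously to the full closed disk. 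The main obstacle is this boundary issue: on $|u|=q^{-\frac12}$ the series defining $Z_K$ only converges conditionally (and at the $u=u_j$ it diverges to $-\infty$, matching the vanishing of $\mathcal L$), so one must argue carefully by continuity from the interior rather than by absolute convergence. Everything else is bookkeeping with the logarithm and the trace formula.
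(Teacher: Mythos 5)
Your argument is correct in its main outline and is the mirror image of the paper's. The paper begins from the factorization $\Lu=\prod_{j}(1-\a_j u)$, writes $\log\Lu=-\sum_{j}\sum_{k\geq1}(\a_j u)^k/k$ (which converges for $|u|<q^{-1/2}$ because $|\a_j u|<1$ there), splits at $k=K$, and then invokes the trace formula \eqref{trace formula} to turn the \emph{head} $k\leq K$ into the prime-power sum defining $\log P_K$; the tail is $\log Z_K$ by definition. You instead begin from the Euler product, obtain $\log\Lu=\sum_{k\geq1}\frac{u^k}{k}\sum_{\deg f=k}\Lambda(f)\chi_D(f)$, split at $K$ (the head is $\log P_K$ by definition), and apply the trace formula to the \emph{tail} to identify it with $\log Z_K$. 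Both routes are valid, but yours needs the extra step ``this identity persists throughout $|u|<q^{-1/2}$,'' and for that claim to be meaningful you must first know the series on the right has radius of convergence at least $q^{-1/2}$, i.e.\ that $\sum_{\deg f=k}\Lambda(f)\chi_D(f)=O(q^{k/2})$. That bound is itself the trace formula (equivalently, RH for $\Lu$), so you end up invoking \eqref{trace formula} twice where the paper's starting point makes the convergence automatic. You should state that estimate explicitly before asserting the continuation.

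On the boundary $|u|=q^{-1/2}$ you correctly diagnose the issue --- the series in \eqref{Z def} converges only conditionally for $\th\neq\th_j$ and not at all at the $u_j$ --- and the alternative finite expression $Z_K(u,\chi_D)=\Lu\exp\bigl(\sum_{j}\sum_{k=1}^{K}(\a_j u)^k/k\bigr)$ is the right object to look at, since it is entire. But the theorem as stated asserts the identity with $Z_K$ defined by the series, so you still owe the reader a proof that the conditionally convergent series agrees with that continuous extension on the circle minus the $u_j$. The paper supplies this via partial summation: $\sum_{k>K}e(k\phi)/k$ converges uniformly for $\delta\leq\phi\leq1-\delta$, so $Z_K$ is continuous on the closed disk with the points $u_j$ removed, and both sides of \eqref{E: hybrid} then agree there by continuity from the interior; at $u=u_j$ one uses the convention of Remark 3 that $Z_K(u_j,\chi_D)=0$, matching $\Lu(u_j)=0$. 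Add that step and your proof is complete.
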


\noindent{\bf Remark 1.} H. Bui and A. Florea~\cite{BF} have, independently and at the same time as the current authors, proved a slightly different (weighted) version of the hybrid formula. They use it to calculate low moments of the $L$-function along the lines of Gonek, Hughes and Keating~\cite{GHK}.

\noindent{\bf Remark 2.} One can prove  similar formulas for other $L$-functions 
defined over finite fields such as for all Dirichlet $L$-functions $L(s,\chi)$ with
$\chi$ a Dirichlet character modulo $Q\in\mathbb{F}_{q}[x]$.

\noindent{\bf Remark 3.}
We see that $\lim_{u\to u_j}Z_K(u, \chi_D)=0$  
within any sector $|\arg (u-u_j)|<\pi/2-\delta$, where
$0<\delta<\pi/2$ is fixed. Thus, we may interpret $Z_K(u_j, \chi_D), \, j=1, 2, \ldots, 2g,$ as zero, even though the infinite series defining $Z_K(u, \chi_D)$ does not converge at $u_j$. The reader should keep this convention in mind throughout. 

\noindent{\bf Remark 4.} If $K=0$, the sum defining $\PKu$ is empty, 
so we  interpret $P_0(u, \chi_D) $ as being identically $1$. 
Thus, $\Lu=Z_0(u,\chi_D)$ for $|u|\leq q^{-\frac12}$. 
Indeed, for such $u$
we see that
\be\label{L=Z_0}
\begin{split}
Z_0(u,\chi_D)
=&\exp\bigg(- \sum_{j=1}^{2g} \bigg(\sum_{k=1}^{\infty} 
\frac{( \a_j u)^{k}}{k} \bigg)  \bigg)\\
=&\exp\bigg( \sum_{j=1}^{2g} \log (1-  \a_j u) \bigg)
= \prod_{j=1}^{2g} (1- \a_j  u).
\end{split}
\ee
In the other   direction, for $|u|\leq q^{-\frac12}$ with small arcs around the zeros $u_j$ removed,  we see from \eqref{Z def} that 
$\lim_{K\to\infty} Z_K(u, \chi_D)=1$ uniformly. Hence, on this set
\be\notag
\begin{split}
\lim_{K\to\infty} P_K(u, \chi_D)
=&\exp\bigg( \sum_{k=1}^{\infty}\sum_{\substack{f\,{\rm{monic}} \\ \deg f=k}} 
\frac{\Lambda(f) \chi_D(f) u^{k}}{k}\bigg)\\
=&\exp\bigg( \sum_{P} \sum_{l=1}^\infty \frac{\chi_D(P^l) u^{l\deg P} }{l}\bigg)\\
=&\prod_{\substack{P \, \mathrm{prime} }}(1-\chi_{D}(P)u^{\mathrm{deg}P})^{-1}.
\end{split}
\ee
In other words, at the extremes, $K=0$ and $K=\infty$, we (essentially) recover  expressions for $\Lu$
as a product over zeros and a product over primes, respectively,  from the hybrid formula.

\begin{proof}
Assume first that
$|u| <q^{-\frac12}$.
Taking logarithms of both sides of  \eqref{L fnc roots def} and using the Taylor series for 
$-\log(1-z),$ \  $ |z|<1$, we see that 
\be\notag
\begin{split}
\log \mathcal L(u, \chi_D) 
=&-\sum_{j=1}^{2g} \Bigg( \sum_{k=1}^\infty \frac{(\a_j u)^{k} }{k} \Bigg)
=-\sum_{j=1}^{2g} \Bigg( \sum_{k=1}^K+\sum_{k=K+1}^\infty\Bigg) \frac{(\a_j u)^{k} }{k}  \\
=&-\sum_{k=1}^K   \frac{u^{k} }{k}  \bigg(\sum_{j=1}^{2g} \a_j^k \bigg)
- \sum_{j=1}^{2g}\Bigg(\sum_{k=K+1}^\infty  \frac{(\a_j u)^{k} }{k}\Bigg)\\
\end{split}
\ee
By  \eqref{trace formula}, the first double sum  equals
\be\notag
\begin{split}
\sum_{k \leq K} \frac{u^{k}}{k}
\Bigg(\sum_{\substack{f\, {\rm{monic}}  \\ \deg f=k}} \Lambda(f) \chi_D(f)  \Bigg).
\end{split}
\ee
The second is simply 
$$
 - \sum_{j=1}^{2g} \Bigg(\sum_{k>K} \frac{( q^{\frac12} e(-\theta_j ) u)^{k}}{k} \Bigg).
$$
Thus,  
\be\label{E: hybrid 2}\notag
\begin{split}
\log \mathcal L(u, \chi_D) 
=&\; \sum_{k=1}^{ K}\Bigg(\sum_{\substack{f\,{\rm{monic}}  \\ \deg f=k}} 
\frac{\Lambda(f) \chi_D(f) u^{k}}{k}\Bigg)
 - \sum_{j=1}^{2g} \Bigg(\sum_{k>K} \frac{( q^{\frac12} e(-\theta_j )u)^{k}}{k} \Bigg).
\end{split}
\ee
Exponentiating this, we obtain \eqref{E: hybrid} for $|u|<q^{-\frac12}$.   
It only remains   to treat the case $|u|=q^{-\frac12}$. On this circle the first term in \eqref{E: hybrid 2} 
is   a polynomial in $u$, so is continuous. The second term, with  $u=q^{-\frac12} e(\th)$ and  $\theta\neq \theta_j$, $j=1,2, \ldots , 2g$, equals
\be\label{Z exp}
\begin{split} 
   - \sum_{j=1}^{2g}\Bigg(\sum_{k>K} \frac{e(k(\th-\th_j))}{k} \Bigg)  .
\end{split}
\ee
By partial summation, the series
 $ 
\sum_{k>K}   e(k\phi)/k 
 $
converges uniformly for  $\delta\leq \phi \leq 1-\delta$, where $0<\delta<\frac12$ is fixed. It follows that \eqref{Z exp}   is continuous on the circle $|u|=q^{-\frac12}$ with the points $u_j$ deleted. Thus $\Lu$ and the function $P_K(u, \chi_D) \, Z_K(u, \chi_D)$ agree and are analytic  in $|u|<q^{-\frac12}$, and are continuous on the circle 
$|u|=q^{-\frac12}$ minus the points $u_1, \ldots, u_{2g}$. They therefore agree on this set, and by our interpretation of $Z_K$ as zero in the   limit as $u\to u_j$, they agree at these points as well. This completes the proof  of the theorem.
 \end{proof}



\section{Approximation of $\Lu$ by $\PKu$}
In this section it simplifies some expressions slightly if we use both the notations 
$\log g$ and  $\log_q g$.

Let $u=q^{-\s-it}$ with $\s>\frac12$ and assume that $K\geq 1$. Then  
\be\notag
\begin{split}
|\ZKu| = \bigg| \sum_{j=1}^{2g} \sum_{k>K} \frac{( \a_j u)^{k}}{k} \bigg| 
 \leq&  2g  \sum_{k>K} \frac{q^{  (\frac12-\s)k} }{k}  
 \leq   \frac{2g \  q^{  (\frac12-\s)K} }{K (1-q^{\frac12-\s}) }.
\end{split}
\ee
If we write 
 $\displaystyle
\s=\frac12+\frac{C\log_q g}{K}$,
with $C >0$ and   possibly depending on $g$, then this equals
$$
\frac{2g^{1-C} }{K   (1-g^{-C/K} ) }.
$$
Since $1-e^{-x}  \geq  {x}/{2}$ for $0\leq x\leq \frac12$,   if we  assume that 
$K\geq 2C\log g$  we have
$$
1-g^{-C/K} \geq \frac{C\log g}{2K}.
$$
Hence the above is
$$
\leq  \frac{4g^{1-C} }{ C\log g}.
$$
Using this with Theorem~\ref{L: hybrid form}, we see that if $C\geq 1$,  then
\be\notag
\Lu=\PKu \Big(1+O\Big(\frac{ 1 }{Cg^{C-1}\log g}\Big)\Big).
\ee

 \begin{theorem} \label{L approx P 1}
Let  $\displaystyle \s=\frac12+\frac{C\log_q g}{K}$ with $C\geq 1$ and
  $K\geq 2C\log g$.  Then for  $|u|\leq  q^{-\s}$ we have
\be\notag
\Lu=\PKu  \Big(1+O\Big(\frac{ 1 }{Cg^{C-1}\log g}\Big)\Big).
\ee
 \end{theorem}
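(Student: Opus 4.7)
The plan is to combine the hybrid formula from Theorem~\ref{L: hybrid form}, which asserts that $\Lu = \PKu \, \ZKu$ throughout $|u|\leq q^{-1/2}$, with a direct estimate of $\ZKu$. Since $\s>\frac12$, the disk $|u|\leq q^{-\s}$ lies strictly inside the circle $|u|=q^{-1/2}$ on which the zeros $u_j$ of $\Lu$ sit, so the exponent defining $\ZKu$ in \eqref{Z def} converges absolutely uniformly throughout this disk and we can manipulate the hybrid formula freely there. The task therefore reduces to showing that $\ZKu = 1 + O\bigl(1/(Cg^{C-1}\log g)\bigr)$ uniformly for $|u|\leq q^{-\s}$.

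To bound the exponent, I would use $|\a_j|=q^{1/2}$ together with the trivial inequality $|u|\leq q^{-\s}$ to write
\begin{equation*}
\Bigl|\sum_{j=1}^{2g}\sum_{k>K}\frac{(\a_j u)^k}{k}\Bigr|\;\leq\;2g\sum_{k>K}\frac{q^{(1/2-\s)k}}{k}\;\leq\;\frac{2g\,q^{(1/2-\s)K}}{K\bigl(1-q^{1/2-\s}\bigr)}.
\end{equation*}
Substituting the choice $\s-\tfrac12 = C\log_q g/K$ replaces $q^{(1/2-\s)K}$ by $g^{-C}$ and $1-q^{1/2-\s}$ by $1-g^{-C/K}$. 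The hypothesis $K\geq 2C\log g$ ensures $C(\log g)/K\leq\tfrac12$, so the elementary inequality $1-e^{-x}\geq x/2$ valid for $0\leq x\leq\tfrac12$, applied with $x=C(\log g)/K$, gives $1-g^{-C/K}\geq C(\log g)/(2K)$. Inserting this back produces the clean exponent bound $4g^{1-C}/(C\log g)$.

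Since $C\geq 1$, this exponent is at most $4/\log g$, hence bounded and indeed small for large $g$. Therefore $\exp(\text{exponent}) = 1 + O(\text{exponent})$, and exponentiating yields the claimed multiplicative error $1+O(1/(Cg^{C-1}\log g))$. There is no substantial obstacle: the analytic content is already packaged in the hybrid formula, and the theorem just formalises the telescoped tail estimate on $\ZKu$ that was carried out in the discussion preceding the statement. The only point that deserves a brief verification is uniformity throughout the \emph{closed} disk $|u|\leq q^{-\s}$ rather than merely on its boundary circle, but this is immediate because the crude bound $|\a_j u|^k\leq q^{(1/2-\s)k}$ is valid pointwise for every $u$ in the disk.
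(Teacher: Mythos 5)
Your proof is correct and follows the same route as the paper's own argument: the paper's proof of Theorem~\ref{L approx P 1} is precisely the discussion in the lines preceding the statement, which carries out the identical bound on the exponent defining $\ZKu$ (using $|\a_j u|\leq q^{1/2-\s}$, the substitution $\s-\tfrac12=C\log_q g/K$, and the inequality $1-e^{-x}\geq x/2$) and then exponentiates via the hybrid formula. The extra sentence you add about uniformity on the closed disk is a harmless clarification, not a new idea.
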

 
We can prove a similar approximation  on the circle $|u|=q^{-1/2}$.
 Write $u=q^{-\frac12}e(\th)$. Then by \eqref{Z def}  
 \be\notag 
Z_K(e(\th)q^{-\frac12}, \chi_D)=\exp\bigg(- \sum_{j=1}^{2g} \bigg(\sum_{k>K} 
\frac{e(k(\th-\th_j ) }{k} \bigg)\bigg).
\ee
By partial summation
\be\notag
\sum_{j=1}^{2g} \bigg(\sum_{k>K} \frac{e(k(\th-\th_j ) }{k} \bigg)
\ll \frac{1}{K+1}  \sum_{j=1}^{2g}\frac{1}{|\sin(\pi (\th-\th_j))|}.
\ee
Let $\|x\| =\min_{n\in \mathbf Z}|x-n|$. Then if we assume that
$$
\min_{1\leq j\leq 2g}\|\th-\th_j\| \geq\frac{c}{2g }
$$
and use the estimate $|\sin \pi x|\geq 2\|x\|$, we find that the above is 
$$
\ll \frac{g^2}{cK} \ .
$$
Thus we have
 \begin{theorem}\label{L approx P 2}
Let  $u=q^{-\frac12}e(\th)$.  Suppose that $c>0$,
$$
\min_{1\leq j\leq 2g}\|\th-\th_j\| \geq \frac{c}{2g },
$$
 and $K  \geq g^2/c.$
 Then 
\be\notag
\Lu=\PKu \Big(1+O\Big(\frac{ g^2}{cK}\Big)\Big).
\ee
 \end{theorem}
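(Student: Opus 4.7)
The plan is to combine the hybrid formula of Theorem~\ref{L: hybrid form} with a sharp estimate of $\ZKu$ on the circle $|u|=q^{-1/2}$ when $u$ is well separated from the zeros. Writing $u=q^{-\frac12}e(\th)$, the hybrid formula reduces the problem to showing that $\ZKu=1+O(g^2/(cK))$ under the stated hypotheses. From \eqref{Z def},
$$
\log \ZKu \;=\; -\sum_{j=1}^{2g}\sum_{k>K}\frac{e(k(\th-\th_j))}{k},
$$
so everything comes down to bounding this double sum.

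For the inner sum, I would apply partial summation: the partial sums $\sum_{k\le N}e(k\phi)$ are bounded by $1/|\sin(\pi\phi)|$, which yields
$$
\sum_{k>K}\frac{e(k\phi)}{k}\;\ll\;\frac{1}{(K+1)\,|\sin(\pi\phi)|}.
$$
Setting $\phi=\th-\th_j$ and invoking the elementary inequality $|\sin\pi x|\geq 2\|x\|$, together with the separation hypothesis $\|\th-\th_j\|\geq c/(2g)$, each reciprocal-sine factor is $\ll g/c$. Summing over the $2g$ values of $j$ then gives
$$
\log \ZKu \;\ll\; \frac{1}{K}\cdot 2g\cdot \frac{g}{c}\;=\;\frac{g^2}{cK}.
$$

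Finally, the hypothesis $K\geq g^2/c$ guarantees $\log\ZKu=O(1)$, so I can exponentiate using $e^{z}=1+O(z)$ for $z$ bounded, obtaining $\ZKu=1+O(g^2/(cK))$. Substituting into $\Lu=\PKu\,\ZKu$ completes the proof.

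The only real subtlety is tracking where each hypothesis is used: the separation condition controls the individual reciprocal-sine factors and is essential to prevent any single term from exploding as $\th\to\th_j$, while the lower bound $K\geq g^2/c$ is needed only at the last step to convert the additive estimate on $\log\ZKu$ into a multiplicative approximation. There is no genuine obstacle here — the calculation immediately preceding the statement in the excerpt already carries out essentially all the work, and the proof amounts to organizing it cleanly and closing off the exponentiation step.
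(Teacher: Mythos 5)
Your proof matches the paper's argument step for step: the hybrid formula reduces the problem to bounding $\log Z_K$, partial summation of $\sum_{k>K}e(k\phi)/k$ gives the $1/((K+1)|\sin\pi\phi|)$ bound, the separation hypothesis together with $|\sin\pi x|\geq 2\|x\|$ controls each term, and the assumption $K\geq g^2/c$ lets you exponentiate. This is exactly the computation the authors carry out immediately before the theorem statement.
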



Observe that as a consequence of Theorems~\ref{L approx P 1} and~\ref{L approx P 2}    
we have for  $|u|\leq q^{-\frac12}$, $u\neq u_j, \, j=1,2, \ldots, 2g$, that
$$
 \lim_{K\to\infty}\PKu\to \Lu.
$$
This was pointed out in {Remark 3} above,  but Theorems~\ref{L approx P 1} 
and ~\ref{L approx P 2} also supply rates of convergence.

\section{Explicit expressions for  $\arg \Lu$ and $\arg \PKu$}

A standard way to define the argument of   a quadratic Dirichlet $L$-function 
$L(\frac12+it, \chi_D)$
when  $t$ is not the ordinate of a  zero, is  by continuous variation starting with the value zero at  $s=2$, moving up the line $\sigma=2 $ as far as $2+it,$ and then horizontally over to $s=\frac12+it$. This makes sense for our  function field  $L$-functions defined in \eqref{E Prod 1} and \eqref{D series 1} as well.    For the alternate form $\Lu$ of $L(s, \chi_D)$, however,
this corresponds to continuous variation in the negative sense along the circular arc from $q^{-2}$ to   a point $q^{-2}e(-\th)$, 
and then along the radius  $ re(-\th)$ from $r=q^{-2}$ to $r=q^{-\frac12}e(-\th).$
This would be $\arg \mathcal L(q^{-\frac12}e(-\th),\chi_D)$, which is the same as\ $-\arg \mathcal L(q^{-\frac12}e(\th),\chi_D)$, since $\Lu$ is real for real values of $u$. 
Thus, denoting the path consisting of the positively oriented  circular arc from $q^{-2}$ to  $q^{-2}e(\th)$ 
followed by the radial segment  from  $q^{-2}e(\th)$ to $ q^{-\frac12}e(\th)$ 
 by  $\Gamma(\th)$, we define
\be\label{arg L 1}
\arg \mathcal L(q^{-\frac12}e(\th),\chi_D)= -\triangle_{\Gamma(\th)}\; \Lu.
\ee
If $q^{-\frac12}e(\th)$ happens to be a zero of $\Lu$, we use the convention that
\be\label{arg L at zeros}
\begin{split}
\arg \mathcal L( &q^{-\frac12}e(\th),\chi_D) 
= \lim_{\e\to 0^{+}}  \arg \mathcal L(q^{-\frac12}e(\th+\e),\chi_D).
\end{split}
\ee
We also define
\be\label{S def 1} 
S(\th,\chi_D) =\frac1\pi \arg\, \mathcal L(q^{-1/2}e(\th),\chi_D).
\ee  
Similarly we let
\be\label{arg PK}
\arg P_K(q^{-\frac12}e(\th),\chi_D)= -\triangle_{\Gamma(\th)}\; \PKu 
\ee
and 
\be\label{SK}\notag
S_K(\th,\chi_D) =\frac1\pi \arg\,   P_K(q^{-1/2}e(\th),\chi_D).
 \ee

Our next goal   is to obtain alternative expressions for   these arguments.
From \eqref{L fnc roots def} and \eqref{arg L 1} we find that if $\th\neq \th_j$ for any $j=1, 2, \ldots,2g$, then
\be\notag 
\begin{split}
 \arg \mathcal L(q^{-1/2}e(\th),\chi_D) 
= &-\triangle_{\Gamma(\th)} \arg \prod_{j=1}^{2g}(1-\a_j u)   \\
=&-\sum_{j=1}^{2g}  \triangle_{\Gamma(\th)}\arg(1-e(\th-\th_j)),
\end{split}
\ee
where, on the last line, we use the value of the argument in $(-\pi/2,\pi/2)$.
Elementary geometric reasoning shows that if $ \phi\notin \mathbb Z$,  then
$\arg(1-e(\phi))=\pi(\{\phi\}-\frac12)$, where $\{x\}$ denotes the fractional part of the real number $x$.
Thus
\be\notag
\begin{split}
 \arg \mathcal L(q^{-1/2}e(\th),\chi_D)
=&\pi \sum_{j=1}^{2g} \big((\{ -\th_j \}-\tfrac12 )-(\{\th-\th_j \}-\tfrac12 )\big)\\
=&\pi \sum_{j=1}^{2g} \big( \{ -\th_j \}-\{\th-\th_j \}\big).
\end{split}
\ee
It follows that for $\th\neq \th_j$,
\be\label{S formula 1}
S(\th,\chi_D) =\sum_{j=1}^{2g} \big(\{ -\th_j \} -\{\th-\th_j \} \big).
\ee
If $\th$ does equal $\th_i$ for some $i$, then by \eqref{arg L at zeros} and \eqref{S def 1},
\be \notag
\begin{split}
S(\th_i,\chi_D)=& \lim_{\e\to0^{+}}
\bigg(  \sum_{j=1}^{2g} (\{-\th_j\}-\{\th_i+\e-\th_j \}) \\
 =&  \sum_{\substack{j=1 \\  j \neq i}}^{2g}  \big(\{-\th_j\}-\{\th_i-\th_j \}\big)
 +  \lim_{\e\to0^{+}}    \big(\{-\th_i\}-\{\th_i+\e-\th_i \}\big)\\
=& \sum_{\substack{j=1}}^{2g}  \big(\{-\th_j\}-\{\th_i-\th_j \}\big).
 \end{split}
\ee
Thus \eqref{S formula 1} holds whether or not $\th$ is  a $\th_i$.

We can express the formula in both cases in a unified way by using the function
\be\notag
s(x)=
\begin{cases}
 \{x\}-\frac12 &\;\hbox{if}\; x\in \mathbb R\setminus \mathbb Z,\\
0 &\;\hbox{if}\; x\in \mathbb Z.
\end{cases}
\ee
We then clearly   have for all $\th$  that
\be\label{S formula 2}
S(\th,\chi_D)=
 \sum_{\substack{j=1  }}^{2g} \big(s(-\th_j)- s(\th -\th_j)\big).
\ee  
Notice that   since $0<\th_j<1$ for all $j=1,2,\ldots,2g$ and since $\th_{g+j}=1-\th_j$ 
for $j=1,2,\ldots,g$,  we have
\be \notag
\begin{split}
 \sum_{\substack{j=1  }}^{2g} s(-\th_j)
 = &\sum_{\substack{j=1  }}^{2g} \big(\{-\th_j\}-\tfrac12\big)
 =\sum_{\substack{j=1  }}^{2g} \big((1-\th_j)-\tfrac12\big)\\
 =&g-\sum_{\substack{j=1  }}^{2g}\th_j 
  = g-\sum_{\substack{j=1  }}^{g}\big(\th_j +(1-\th_j)\big) =0.
\end{split}
\ee
Thus, \eqref{S formula 2} is equivalent to
\be\notag
S(\th,\chi_D)=
- \sum_{\substack{j=1  }}^{2g}  s(\th -\th_j).
\ee
Now it is well  known  (for example, see Montgomery and Vaughan~\cite{MV}, p. 536) that
\be\label{sawtooth approx}
s(x) = -\sum_{k=1}^{K} \frac{\sin (2\pi x k)}{\pi k} +E_K(x),
\ee
where for $K\geq 1$,
\be\label{E_K}\notag
|E_K(x)|
\leq \min\bigg(\frac12,  \frac{1}{ \pi (2K+1) |\sin \pi x |}\bigg)
\leq  \min\bigg(\frac12,  \frac{1}{ 4\pi K \|  x \|}\bigg).
\ee
From this bound  we see that the series
$$
 -\sum_{k=1}^{\infty} \frac{\sin (2\pi x k)}{\pi k} 
$$
converges pointwise to $s(x)$ when  $x\notin\mathbb Z$. 
Moreover,  the series clearly converges to  $s(x)$ when  $x\in \mathbb Z$ as well, since then every term is zero. We may therefore summarize the above   in    
\begin{theorem}\label{T: S}
For   $\th\in \mathbb R$,
\be\label{S form}\notag
S(\th,\chi_D)=
 \sum_{\substack{j=1  }}^{2g} \big(s(-\th_j)- s(\th -\th_j)\big)
 =- \sum_{\substack{j=1  }}^{2g}  s(\th -\th_j)
 \ee
and  
\be\notag
S(\th, \chi_D) 
=   \sum_{j=1}^{2g} \sum_{k=1}^{\infty} \frac{\sin (2\pi( \th- \th_j )  k)}{\pi k} .
\ee
\end{theorem}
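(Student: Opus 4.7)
The plan is to assemble the theorem from the identities derived in the paragraphs preceding its statement and then invoke the Fourier expansion \eqref{sawtooth approx} of the sawtooth function $s$. Everything needed for the first displayed equation has in fact already been established in the discussion leading up to the statement; the second displayed equation is an essentially immediate consequence of \eqref{sawtooth approx}. So the role of the proof is mainly organizational, together with a brief justification of pointwise convergence at the values $\th=\th_j$.

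First I would recall the two intermediate identities derived above: the case-by-case computation (using \eqref{arg L 1}--\eqref{S def 1}, the geometric fact $\arg(1-e(\phi))=\pi(\{\phi\}-\tfrac12)$ for $\phi\notin\mathbb Z$, and the limit convention \eqref{arg L at zeros} when $\th=\th_i$) shows that
\[
S(\th,\chi_D)=\sum_{j=1}^{2g}\bigl(s(-\th_j)-s(\th-\th_j)\bigr)
\]
for every real $\th$. The symmetry $\th_{g+j}=1-\th_j$, together with $0<\th_j<1$, then yields $\sum_{j=1}^{2g}s(-\th_j)=0$ by the brief telescoping computation carried out just before the theorem. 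Combining these two observations gives the first displayed identity in the theorem.

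For the second identity I would substitute the expansion
\[
s(x)=-\sum_{k=1}^{\infty}\frac{\sin(2\pi x k)}{\pi k}
\]
into $S(\th,\chi_D)=-\sum_{j=1}^{2g}s(\th-\th_j)$. For each fixed $j$ the series in $k$ converges by Dirichlet's test when $\th-\th_j\notin\mathbb Z$, with the convention (consistent with the definition of $s$) that the series equals $0$ when $\th-\th_j\in\mathbb Z$; pointwise convergence at all other points follows from \eqref{sawtooth approx} and the stated bound on $E_K$, since that bound shows $E_K(\th-\th_j)\to 0$ as $K\to\infty$ whenever $\th-\th_j\notin\mathbb Z$. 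Because the outer sum over $j$ has only $2g$ terms, interchanging the finite outer sum with the convergent inner series presents no issue, and one obtains the stated double-sum expression for $S(\th,\chi_D)$.

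The only mildly delicate point is the behavior at the zeros $\th=\th_j$, where the inner series is not absolutely convergent but converges by our convention (and matches the limit definition \eqref{arg L at zeros} used to define $S(\th_j,\chi_D)$). This is not really an obstacle since it is handled by the convention built into the definition of $s$, but it is the step that should be called out explicitly so that the theorem is valid for all real $\th$ rather than only for $\th$ avoiding the $\th_j$.
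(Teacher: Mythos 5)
Your proposal is correct and follows essentially the same route as the paper: the first identity is assembled from the case analysis and the symmetry observation $\sum_j s(-\th_j)=0$ established in the preceding paragraphs, and the second follows by substituting the Fourier expansion \eqref{sawtooth approx} of $s$ and invoking the $E_K$ bound (together with the convention at integer arguments) for pointwise convergence. No gaps.
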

Note that the second formula in the theorem is what we would obtain formally from the first line of \eqref{L=Z_0} on taking \eqref{arg L 1} and basic properties of the $\th_j$'s  into account.

%
%
%
%
We can obtain   similar expressions for $\arg P_K(q^{-\frac12}e(\th), \chi_D)$.
From  \eqref{P def} and \eqref{arg PK}  
\be\notag
\begin{split}
\arg P_K(q^{-\frac12}e(\th),\chi_D)= &-\triangle_{\Gamma(\th)}\; \PKu \\
=&-\Im \; \sum_{k=1}^{ K}\;   \frac{e(k\th)}{k} 
\Bigg(   q^{-k/2}\sum_{\substack{f\, {\rm{monic}} \\ \deg f=k}} \Lambda(f) \chi_D(f) \Bigg).
\end{split}
\ee
Using \eqref{trace formula} to replace the expression in parentheses, we find that 
\be\label{arg P_K}\notag
\begin{split}
\arg P_K(q^{-\frac12}e(\th),\chi_D)
= &  \Im \; \sum_{k=1}^{ K}\;   \frac{e(k\th)}{k} 
\bigg(  \sum_{j=1}^{2g} e(-k\th_j)  \bigg) \\
=&   \sum_{j=1}^{2g} \bigg( \sum_{k=1}^{ K}\;   \frac{\sin(2\pi(\th-\th_j) k)}{k} 
  \bigg) .
\end{split}
\ee
By \eqref{sawtooth approx} this equals
 \be\notag
\begin{split}
   \pi \sum_{j=1}^{2g} \big( -s(\th-\th_j) + E_K(\th-\th_j)  \big).
\end{split}
\ee
Hence, we have  
\begin{theorem}\label{T: SK} For $K\geq 1$
 \be\label{arg P 2}\notag
\begin{split}
S_K(\th,\chi_D)=&    \sum_{j=1}^{2g} \big( -s(\th-\th_j) +  E_K(\th-\th_j)  \big) 
\end{split}
\ee
and 
\be\label{arg P 3}
S_K(\th,\chi_D)= \sum_{j=1}^{2g} \bigg( \sum_{k=1}^{ K}\;   
\frac{\sin(2\pi (\th-\th_j) k)}{\pi k} 
  \bigg),
\ee
where
\be\label{E term}\notag
|E_K(\th-\th_j)|
\leq \min\bigg(\frac12,  \frac{1}{ 4\pi K \| \th-\th_j\| }\bigg).
\ee
\end{theorem}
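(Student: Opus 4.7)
The plan is to evaluate $\arg P_K$ directly from the exponential definition \eqref{P def}. Writing $P_K(u,\chi_D)=\exp\bigl(\sum_{k=1}^K c_k u^k\bigr)$ with $c_k = k^{-1}\sum_{\deg f = k}\Lambda(f)\chi_D(f)\in\mathbb{R}$, we see that $P_K(u,\chi_D)$ is entire and nowhere vanishing, so $\log P_K(u,\chi_D)$ defines a single-valued analytic function on $\mathbb{C}$. Since this logarithm is real at the real starting point $u=q^{-2}$ of $\Gamma(\theta)$, the continuous variation defining $\arg P_K(q^{-1/2}e(\theta),\chi_D)$ in \eqref{arg PK} is simply the imaginary part of $\log P_K$ evaluated at the endpoint, up to the sign dictated by the convention. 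This yields
\[
\arg P_K(q^{-1/2}e(\theta),\chi_D) = -\Im\sum_{k=1}^{K} \frac{e(k\theta)}{k}\Bigg(q^{-k/2}\sum_{\substack{f\ \mathrm{monic}\\ \deg f = k}}\Lambda(f)\chi_D(f)\Bigg).
\]

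Next I would apply the trace formula \eqref{trace formula} to replace the bracketed inner sum by $-\sum_{j=1}^{2g}e(-k\theta_j)$. Taking imaginary parts collapses the expression to $\sum_{j=1}^{2g}\sum_{k=1}^{K}\sin(2\pi k(\theta-\theta_j))/k$, and dividing by $\pi$ produces the second formula \eqref{arg P 3} of the theorem. The first formula then follows immediately by inserting the sawtooth Fourier identity \eqref{sawtooth approx} into each inner $k$-sum, which converts it to $-s(\theta-\theta_j)+E_K(\theta-\theta_j)$; the quoted bound on $|E_K|$ is the one already recorded right after \eqref{sawtooth approx}.

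I anticipate no real obstacle: the argument is little more than a direct substitution of \eqref{trace formula} and \eqref{sawtooth approx} into the contour-variation expression for $\arg P_K$. The only small subtlety worth checking is that $\log P_K$ can be tracked single-valuedly along $\Gamma(\theta)$, but this is automatic because $P_K$ is the exponential of a polynomial and hence zero-free on all of $\mathbb{C}$, so there is no monodromy to account for and the reduction to the endpoint value of $\Im\log P_K$ is unambiguous.
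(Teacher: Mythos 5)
Your proposal is correct and follows essentially the same route as the paper: express $\log P_K$ from the exponential in \eqref{P def}, take the imaginary part at the endpoint of $\Gamma(\theta)$ (noting the sign convention of \eqref{arg PK}), substitute the trace formula \eqref{trace formula}, and finish with the sawtooth expansion \eqref{sawtooth approx}. Your added remark that $\log P_K$ is single-valued because $P_K$ is a zero-free exponential of a polynomial is a slight elaboration of a step the paper leaves implicit, but the argument is otherwise identical.
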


From Theorems~\ref{T: S} and \ref{T: SK} 
we immediately have

\begin{corollary}\label{C: S-SK} For $K\geq 1$
\be\label{ineq S-SK}
|S(\th,\chi_D)  -S_K(\th,\chi_D)| \leq \sum_{j=1}^{2g}\min\bigg(\frac12,  
\frac{1}{ 4\pi K \| \th-\th_j\| }\bigg).
\ee
\end{corollary}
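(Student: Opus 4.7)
The corollary follows immediately by subtracting the two formulas produced in Theorems \ref{T: S} and \ref{T: SK}, so my plan is essentially to observe that all the hard work has already been done. Specifically, Theorem \ref{T: S} gives the closed form $S(\th,\chi_D) = -\sum_{j=1}^{2g} s(\th-\th_j)$, while Theorem \ref{T: SK} expresses $S_K(\th,\chi_D)$ as the same sum plus an error, namely $-\sum_{j=1}^{2g} s(\th-\th_j) + \sum_{j=1}^{2g} E_K(\th-\th_j)$. Subtracting one formula from the other, the sawtooth contributions cancel identically and we are left with
\[
S(\th,\chi_D) - S_K(\th,\chi_D) = -\sum_{j=1}^{2g} E_K(\th-\th_j).
\]

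Having isolated the difference as a clean sum of error terms, the next step is to take absolute values, apply the triangle inequality, and then invoke the pointwise bound on $E_K$ recorded at the end of Theorem \ref{T: SK}, which asserts
\[
|E_K(\th-\th_j)| \leq \min\!\left(\tfrac12,\ \tfrac{1}{4\pi K\|\th-\th_j\|}\right)
\]
for each $j$. Combining this with the triangle inequality yields \eqref{ineq S-SK} directly.

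There is no genuine obstacle here: the proof is a one-line deduction from the two preceding theorems, and the main conceptual work, namely establishing the identity for $S(\th,\chi_D)$ via the sawtooth function and the Fourier expansion bound for $E_K(x)$, has already been carried out. The only thing to be mildly careful about is that the $s(\th-\th_j)$ terms appearing in both theorems are defined identically (including the convention $s(x)=0$ for $x\in\mathbb Z$), so that cancellation is valid term-by-term without any ambiguity at the points $\th = \th_j$; this is immediate from the definitions given just above \eqref{S formula 2}.
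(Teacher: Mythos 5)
Your proof is correct and matches the paper's intent exactly: the paper states the corollary as an immediate consequence of Theorems~\ref{T: S} and~\ref{T: SK}, and the subtraction-then-triangle-inequality argument you spell out (with the sawtooth terms cancelling and the $E_K$ bound applied termwise) is precisely that immediate deduction.
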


 
\section{The counting function for the zeros of $\Lu$}

It is  a simple matter to count 
 the number of zeros  of $\Lu$ on an arc of the circle 
 $|u|=q^{-\frac12}$.  This was done by a slightly different method by Faifman and Rudnick~\cite{FR} for the  hyperelliptic ensemble  $\mathcal H_{2g+2, q}$ of even degree monic polynomials. We include  a proof because it is short.

Let $N(\th, \chi_D)$ denote  the number of zeros of $\Lu$ on the  
circular  arc $q^{-\frac12}e(\phi)$, \; $0\leq \phi\leq \th\leq 1$. 
That is, 
\be\notag
N(\th, \chi_D) =  \sum_{\th_j\leq \th}  1.
\ee
For the moment we assume that  $\th\neq \th_j, \, j=1,2, \ldots, 2g$.
Let   $C(\th)$ be the positively oriented contour 
consisting of the circular arc $q  e(\phi)$ from $\phi=0$ to $\phi=\th$, 
the radial segment $r e(\th)$ from $r=q $ to $r=q^{-2}$, the circular arc  
$ e(\phi)q^{-2}$ from $\phi=\th$ to $\phi=0$, and then the real segment  
 from $r=q^{-2}$ to $r=q $. Then
\be\label{N fnc}\notag
N(\th, \chi_D)= \frac{1}{2\pi} \triangle_{C} \arg \Lu. 
\ee
The change in argument along the real segment is zero. 
Along the outer circular arc from $ q $ to $q e(\th)$, and then  
along the radius from $q e(\th)$ to $ q^{-1/2}e(\th)$, we  use the functional equation
\be \notag 
\Lu = (qu^2)^g \mathcal{L}(1/qu, \chi_D)
\ee 
 to see that the change in argument equals $4\pi \th g +\triangle_{\gamma(\th)} \arg \Lu$,
 where $\gamma(\th)$ is the contour consisting of the 
circular arc from $u=q^{-2}$ to $u= q^{-2}e(-\th)$ and then continuing
along the radius $re(-\th)$ from $r=q^{-2}$ to $ q^{-\frac12}.$
But this is just minus the change in argument along $\Gamma(\th)$ (see just above \eqref{arg L 1}), which 
we defined to be $\arg\mathcal L(q^{-\frac12}e(\th), \chi_D)$. Thus
the change along the outer part of the contour equals \
$4\pi \th g+\arg\mathcal L(q^{-\frac12}e(\th), \chi_D)$. By \eqref{arg L 1} the remaining 
change in argument also equals $\arg\mathcal L(q^{-\frac12}e(\th), \chi_D)$.
Thus
\be \label{count zeros}
\begin{split}
N(\th, \chi_D)=& \frac{1}{2\pi} (4\pi\th g+2\arg\mathcal L(q^{-\frac12}e(\th), \chi_D)) \\
=& 2g\th  +\frac1\pi \arg\mathcal L(q^{-\frac12}e(\th), \chi_D)) \\
=& 2g\th+ S(\th, \chi_D).
\end{split}
\ee
If $\th=\th_j$ for some $j$, our convention \eqref{arg L at zeros} means that \eqref{count zeros} holds
in this case as well.
Thus we have proved
\begin{theorem} For all $\th\in[0,1]$
 we have
\be\label{count zeros 2}
N(\th, \chi_D)=2g\th +S(\th, \chi_D).
\ee
\end{theorem}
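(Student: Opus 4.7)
The plan is to count the zeros by the argument principle on a closed contour that captures exactly the zeros $u_j = q^{-1/2}e(\th_j)$ with $0 \leq \th_j \leq \th$, then split the boundary change in argument into pieces, using the functional equation \eqref{Fnc Eqn} to fold the ``outer'' change onto the ``inner'' change that defines $\arg \Lu$.

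I would first assume $\th \neq \th_j$ for any $j$ (the case $\th = \th_i$ will be recovered from our conventions). Form the positively oriented closed contour $C(\th)$ consisting of: the outer circular arc $qe(\phi)$ for $\phi \in [0,\th]$; the radial segment $re(\th)$ from $r=q$ inward to $r=q^{-2}$; the inner circular arc $q^{-2}e(\phi)$ traversed from $\phi=\th$ back to $\phi=0$; and the real segment $[q^{-2},q]$ on the positive real axis. Since by Weil's theorem all zeros of $\Lu$ lie on $|u|=q^{-1/2}$, the zeros enclosed are precisely those with argument in $(0,\th)$, and the argument principle gives
\[
N(\th, \chi_D) = \frac{1}{2\pi}\,\triangle_{C(\th)} \arg \Lu .
\]

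Next I would compute the four contributions. The real segment contributes $0$ because $\Lu$ is real and nonzero there. The inner arc run from $q^{-2}e(\th)$ down to $q^{-2}$ followed implicitly by the radial segment realizes $-\Gamma(\th)$, and hence by definition \eqref{arg L 1} contributes $\arg \mathcal L(q^{-1/2}e(\th), \chi_D) = \pi S(\th,\chi_D)$. For the outer arc plus the inward radial segment I would apply the functional equation: writing $\Lu = (qu^2)^g\, \mathcal L(1/(qu), \chi_D)$, the prefactor $(qu^2)^g$ contributes $4\pi g\th$ to the change in argument, while the image of this outer path under the involution $u \mapsto 1/(qu)$ is exactly $\Gamma(\th)$, contributing another $\pi S(\th, \chi_D)$. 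Summing the four pieces and dividing by $2\pi$ yields $N(\th, \chi_D) = 2g\th + S(\th,\chi_D)$.

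Finally, for the boundary case $\th = \th_i$, the right-hand-side convention \eqref{arg L at zeros} for $\arg \Lu$ (equivalently, the sawtooth formula \eqref{S formula 2}) gives the right-continuous value of $S(\th, \chi_D)$, while $N(\th, \chi_D)$ is itself right-continuous at $\th_i$ by its definition via $\th_j \leq \th$, so \eqref{count zeros 2} persists. The main obstacle is purely bookkeeping: one must carefully track the orientation of each arc and correctly identify the image of the outer part of $C(\th)$ under $u \mapsto 1/(qu)$ with the path $\Gamma(\th)$ so that the functional equation produces $+\pi S(\th,\chi_D)$ (not its negative) and the two ``inner'' contributions add constructively to give the factor of $2$ that appears in front of $\arg \Lu$.
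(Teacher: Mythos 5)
Your overall strategy---argument principle on the contour $C(\th)$, using the functional equation to fold the outer contribution onto the inner one, then combining with the inner part of the radial and the inner arc that realize the reversal of $\Gamma(\th)$---is exactly the paper's. However, the claim that the image of the outer part of $C(\th)$ under $u\mapsto 1/(qu)$ is ``exactly $\Gamma(\th)$'' is incorrect, and this is not mere bookkeeping: it is precisely the point where a sign must be justified, and your write-up does not justify it. Concretely, $u=qe(\phi)$ maps to $1/(qu)=q^{-2}e(-\phi)$, so the outer arc ($\phi:0\to\th$) maps to the clockwise arc from $q^{-2}$ to $q^{-2}e(-\th)$, and the outer part of the radial at angle $\th$ (from $qe(\th)$ in to $q^{-1/2}e(\th)$) maps to a radial segment at angle $-\th$ ending at $q^{-1/2}e(-\th)$. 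The image is therefore the path $\gamma(\th)$ of the paper's proof, the \emph{reflection} of $\Gamma(\th)$ across the real axis, not $\Gamma(\th)$ itself. Had the image really been $\Gamma(\th)$, by \eqref{arg L 1} its contribution would be $\triangle_{\Gamma(\th)}\arg\Lu = -\arg\mathcal L(q^{-1/2}e(\th),\chi_D) = -\pi S(\th,\chi_D)$, which would \emph{cancel} the inner contribution of $+\pi S(\th,\chi_D)$ and yield the false $N(\th,\chi_D)=2g\th$. The missing ingredient is that $\Lu$ has real coefficients, hence $\mathcal L(\overline u,\chi_D)=\overline{\Lu}$, so the change in argument along the conjugate path $\gamma(\th)$ is the negative of that along $\Gamma(\th)$: $\triangle_{\gamma(\th)}\arg\Lu=-\triangle_{\Gamma(\th)}\arg\Lu=+\pi S(\th,\chi_D)$. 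Once this observation is supplied, your two pieces really do add to give $2\arg\mathcal L(q^{-1/2}e(\th),\chi_D)$, and the rest of the derivation, including the boundary case $\th=\th_i$ via \eqref{arg L at zeros}, is correct.
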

As a check of this formula we perform the following calculation.
According to Theorem~\ref{T: S}, if $0\leq \th\leq 1$,
\be\notag 
\begin{split}
S(\th,\chi_D)=
 \sum_{\substack{j=1  }}^{2g} \big(\{-\th_j\}- \{\th -\th_j\}\big).
\end{split}
\ee
Now $\{-\th_j\}=1-\th_j$, and 
$$
\{\th -\th_j\}=
\begin{cases}
\th-\th_j &\quad\hbox{ if}\quad 0<\th_j\leq \th,\\
1+\th-\th_j &\quad\hbox{ if}\quad \th < \th_j <1.
\end{cases}
$$
Hence the above is
\be \notag
\begin{split}
S(\th,\chi_D)=&
 \sum_{0<\th_j\leq \th} \big((1-\th_j)-(\th-\th_j)\big)
 +\sum_{\th<\th_j<1} \big((1-\th_j)-(1+\th-\th_j)\big)\\
 =& \sum_{0<\th_j\leq \th}  (1 - \th )-\sum_{\th<\th_j<1}\th \\
 =& N(\th, \chi_D)-2g\th,
\end{split}
\ee
which agrees with \eqref{count zeros 2}.

\section{Upper bounds  for $S(\th, \chi_D)$ and $S_K(\th, \chi_D)$}

Faifman~\cite{F} (see also Faifman and Rudnick~\cite{FR}, Proposition 5.1) 
has  shown that for the Hyperelliptic ensemble $\mathcal H_{2g+2, q}$,
\be\label{S bd}\notag
S(\th, \chi_D) \ll \frac{g}{\log_q g}.
\ee 
This is the analogue of the best known bound  for the order of 
$S(t)=(1/\pi)\arg\zeta(\frac12+it)$ on RH, namely,
\be\notag
S(t) \ll \frac{\log t}{\log \log t}.
\ee 
It is clear that the  methods of \cite{F} and \cite{FR}
apply to our ensemble  $\mathcal H_{2g+1, q}$ as well. 
In this section we   first give a  proof of this and then show that the same bound holds 
for $S_K(\th, \chi_D)$ if $K$ is sufficiently large with respect to $g$.
 
We use the following approximation result which we state without proof (see, for example, Montgomery~\cite{M}).
\begin{lemma}
Let $I=[\a,\beta]$ be an arc in $\mathbb T$ with length $\beta-\a<1$. Then for any positive integer $K$ there are trigonometric polynomials 
 $$  T^{\pm}(x) =\sum_{k=-K}^{K} a^{\pm}(k)e(kx)     $$
such that
\begin{itemize}
\item[(a)] $\displaystyle T^{-}(x) \leq \chi_I(x)\leq T^{+}(x)$ for all $x$,
\item[(b)] $\displaystyle \int_0^1 T^{\pm}(x) dx   = \beta-\alpha \pm \frac1{K+1}$.
\end{itemize}
\end{lemma}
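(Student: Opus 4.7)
The plan is to prove this as the classical Beurling--Selberg majorant/minorant construction, adapted to the torus. The key device is the Beurling extremal function $B(z)$: an entire function of exponential type $2\pi$ that dominates $\operatorname{sgn}(x)$ on $\mathbb{R}$ and satisfies $\int_{\mathbb{R}}(B(x)-\operatorname{sgn}(x))\,dx = 1$, together with its reflection $b(x) = -B(-x)$, which is minorized by $\operatorname{sgn}(x)$ with the same $L^1$ defect. The explicit formula
\[
B(z) = \Bigl(\frac{\sin \pi z}{\pi}\Bigr)^{2}\Bigl(\sum_{n=0}^{\infty}\frac{1}{(z-n)^{2}} - \sum_{n=1}^{\infty}\frac{1}{(z+n)^{2}} + \frac{2}{z}\Bigr)
\]
is a convenient starting point, but for the proposal I would just invoke its existence and standard properties.

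First, I would build majorants and minorants of $\chi_{[\alpha,\beta]}$ on the line by writing the indicator as a difference of two Heaviside-like functions:
\[
\chi_{[\alpha,\beta]}(x) = \tfrac{1}{2}\bigl(\operatorname{sgn}(x-\alpha) - \operatorname{sgn}(x-\beta)\bigr) + \tfrac{1}{2}\bigl(\chi_{\{\alpha\}}+\chi_{\{\beta\}}\bigr).
\]
Setting
\[
F_{+}(x) = \tfrac{1}{2}\bigl(B\bigl((K+1)(x-\alpha)\bigr) + B\bigl((K+1)(\beta - x)\bigr)\bigr),
\]
and analogously $F_{-}$ with $b$ in place of $B$, gives entire functions of exponential type $2\pi(K+1)$ that sandwich $\chi_{[\alpha,\beta]}$ pointwise on $\mathbb{R}$, with $L^{1}(\mathbb{R})$-defect exactly $1/(K+1)$ on each side, by a change of variables in the normalizing integral for $B$.

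Second, I would periodize by Poisson summation: define $T^{\pm}(x) = \sum_{n\in\mathbb{Z}} F_{\pm}(x+n)$. Since $\widehat{F_{\pm}}$ is supported in $[-(K+1),K+1]$, the Poisson formula gives
\[
T^{\pm}(x) = \sum_{|k|\leq K} \widehat{F_{\pm}}(k)\, e(kx),
\]
the boundary Fourier modes $k=\pm(K+1)$ vanishing because the Fourier transform of $F_{\pm}$ is continuous and supported on the closed interval. Pointwise domination is preserved under summing nonnegative differences $F_{+}-\chi_{[\alpha,\beta]}$ and $\chi_{[\alpha,\beta]} - F_{-}$, establishing (a). For (b), $\int_{0}^{1} T^{\pm}(x)\,dx = \widehat{T^{\pm}}(0) = \widehat{F_{\pm}}(0) = \int_{\mathbb{R}} F_{\pm}(x)\,dx = (\beta-\alpha) \pm \tfrac{1}{K+1}$, using the normalization of $B$.

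The main obstacle is obtaining the sharp constant $1/(K+1)$ rather than $O(1/K)$; this is where the extremality of Beurling's function is essential and cannot be replaced by a softer Fejér-kernel argument. A secondary technical point is justifying that periodization sends a degree-$2\pi(K+1)$ band-limited function with controlled decay into a trigonometric polynomial of degree at most $K$ (rather than $K+1$); this hinges on the vanishing of $\widehat{F_{\pm}}$ at the endpoints of its supporting interval, which follows from the explicit form of $B$ and the specific scaling factor $K+1$ chosen above. Everything else is routine bookkeeping.
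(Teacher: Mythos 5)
The paper states this lemma without proof, citing Montgomery's \emph{Ten Lectures}; your argument is the standard Beurling--Selberg construction that appears in that reference, and it is essentially correct. One small quibble: the vanishing of $\widehat{F_{\pm}}$ at the endpoints $\pm(K+1)$ of its supporting interval does not really hinge on ``the explicit form of $B$'' as you suggest; it follows directly from the fact that $F_{\pm}\in L^1(\mathbb{R})$ (so $\widehat{F_{\pm}}$ is continuous) together with Paley--Wiener (so $\widehat{F_{\pm}}(t)=0$ for $|t|>K+1$), whence $\widehat{F_{\pm}}(\pm(K+1))=\lim_{t\to (K+1)^{+}}\widehat{F_{\pm}}(t)=0$. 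You should also note, for the periodization step, that the hypothesis $\beta-\alpha<1$ is exactly what guarantees $\sum_{n\in\mathbb{Z}}\chi_{[\alpha,\beta]}(x+n)=\chi_{I}(x)$ with no overlap, so the pointwise inequalities survive the passage to $\mathbb{T}$; this is implicit in your write-up but worth making explicit.
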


\begin{theorem}
For $0<\th<1$,
\be\label{S bd 1}
S(\th, \chi_D) \ll \frac{g}{\log_q g}.
\ee
\end{theorem}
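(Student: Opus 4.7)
The plan is to exploit the counting-function identity $N(\th,\chi_D)=2g\th+S(\th,\chi_D)$ from \eqref{count zeros 2} together with the Beurling--Selberg trigonometric approximation just stated. Since $N(\th,\chi_D)=\sum_{j=1}^{2g}\chi_{I}(\th_j)$ for the arc $I=[0,\th]\subset\mathbb T$, the lemma gives
\be\notag
\sum_{j=1}^{2g} T^{-}(\th_j)\;\leq\;N(\th,\chi_D)\;\leq\;\sum_{j=1}^{2g}T^{+}(\th_j),
\ee
and hence, after subtracting $2g\th$,
\be\notag
S(\th,\chi_D)\;=\;\pm\frac{2g}{K+1}+\sum_{1\le|k|\le K}a^{\pm}(k)\sum_{j=1}^{2g}e(k\th_j)+O(1),
\ee
where I used property (b) of the lemma for the $k=0$ contribution.

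Next I would convert the inner sums over zeros into sums over prime polynomials via the trace formula \eqref{trace formula}: for each $k\geq 1$,
\be\notag
\sum_{j=1}^{2g}e(-k\th_j)\;=\;-q^{-k/2}\sum_{\substack{f\,\mathrm{monic}\\ \deg f=k}}\Lambda(f)\chi_D(f),
\ee
and similarly for $-k$ by real-conjugation (since $\chi_D$ is real-valued). The trivial bound $\sum_{\deg f=k}\Lambda(f)=q^k$ (the prime polynomial theorem in $\mathbb F_q[x]$) then yields $\bigl|\sum_j e(k\th_j)\bigr|\le q^{|k|/2}$, which is useful for small $k$, while trivially $\bigl|\sum_j e(k\th_j)\bigr|\le 2g$ for all $k$. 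Combined with the standard bound $|a^{\pm}(k)|\ll 1/|k|$ for the Selberg-type coefficients (valid for $1\leq|k|\leq K$), I obtain
\be\notag
|S(\th,\chi_D)|\;\ll\;\frac{g}{K}+\sum_{k=1}^{K}\frac{\min(2g,\,q^{k/2})}{k}.
\ee

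The optimization step chooses $K$ so that the two tail regimes are balanced. The crossover occurs at $k\approx 2\log_q(2g)$, where $q^{k/2}\asymp g$. Picking $K=\lfloor 2\log_q g\rfloor$, the geometric sum up to $K$ is dominated by its last term $q^{K/2}/K\ll g/\log_q g$, and the main contribution $g/K$ matches this. This gives the desired bound $S(\th,\chi_D)\ll g/\log_q g$.

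The main obstacle is making sure that the coefficient bound $|a^{\pm}(k)|\ll 1/|k|$ is applicable; this is classical for Vaaler/Selberg majorants of an interval characteristic function but is not explicitly recorded in the lemma as stated, so I would either cite Montgomery \cite{M} (or Vaaler) for this refinement or replace the lemma by the sharper Selberg version. A secondary (minor) nuisance is dealing with the endpoints: for $\th$ coinciding with some $\th_j$ the sandwich bound still applies because the indicator function differs only on a null set and the convention \eqref{arg L at zeros} handles the jump. Everything else is a straightforward calculation once the trace formula, the prime-sum bound, and the trigonometric approximation are in place.
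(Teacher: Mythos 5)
Your proposal follows essentially the same route as the paper's proof: the identity $N(\th,\chi_D)=2g\th+S(\th,\chi_D)$, the Beurling--Selberg majorant/minorant sandwich, the trace formula relating the power sums $\sum_j e(k\th_j)$ to prime-polynomial sums, the coefficient bound $|a^\pm(k)|\ll 1/|k|$, and an optimization over $K\asymp\log_q g$. One minor point in your favor: you use the exact identity $\sum_{\deg f=k}\Lambda(f)=q^k$, whereas the paper writes $\ll q^k/k$ (a harmless slip conflating this weighted sum with the count of prime polynomials of degree $k$), so your arithmetic is actually the cleaner of the two.
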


\begin{proof}
For $0<\th <1$ we have 
\be \notag
\begin{split}
 N(\th, \chi_D)=&\sum_{j=1}^{2g}\chi_{[0, \th]}(\th_j) \leq \sum_{j=1}^{2g}T^+(\th_j)   \\
=&2ga^{+}(0)\,+\,\sum_{\substack{k=-K\\k\neq 0}}^{K} a^{+}(k) \bigg(\sum_{j=1}^{2g} e(k\th_j)\bigg).
\end{split}
\ee
By \eqref{trace formula} and part (b) of the lemma, we thus see that
\be\label{S upper bd 1} 
\begin{split}
N(\th, \chi_D)\leq 
 2g\Big(\th+\frac1{K+1}\Big) -2\sum_{k=1}^{K} \frac{a^{+}(k)}{q^{k/2}}\bigg(\sum_{\deg f=k}\Lambda(f) \chi_D(f)\bigg).
\end{split}
\ee
Recall that if $f\in L^1(\mathbb T)$, then
$$
|\hat{f}(k) |=\bigg| \int_0^1   f(x) e(-kx) dx\bigg|
\leq \int_0^1   |f(x)| dx.
$$
Since 
$$
\hat{\chi}_{[0,\th]}(k) = e(-k\th/2)\frac{\sin\pi k\th}{\pi k}
$$
for $k\neq 0$, if we take $f=T^{+}-\chi_{[0,\th]}$, then again by (b) of the lemma
\be\notag
\Big|e(-k\th/2)\frac{\sin\pi k\th}{\pi k}- a^{+}(k) \Big|
\leq  \int_0^1  |\chi_{[0,\th]}(x)-T^+(x)| dx \leq \frac{1}{K+1}.
\ee
Thus,
$$
|a_k^{+}| \leq \frac{1}{K+1} + \bigg| \frac{\sin\pi k\th}{\pi k}\bigg|
\ll   \min\Big(\|\th\|, \frac1k\Big).
$$
From \eqref{S upper bd 1} it now follows that
\be \notag
\begin{split}
S(\th, \chi_D) =N(\th, \chi_D)-2g \th 
\leq &\frac {2g}{K} +O\bigg(\sum_{k=1}^{K}
\frac{ \min(\|\th\|, k^{-1}) }{q^{k/2}}\bigg(\sum_{\deg f=k}\Lambda(f)  \bigg)\bigg).
\end{split}
\ee
By the prime polynomial theorem, the sum in parentheses is $\ll q^k/k$.
Thus the second term on the right is
$$
\ll \sum_{k=1}^{K} \frac{ q^{k/2}  }{k^2} \ll q^{K/2}.
$$
Hence
$$
S(\th, \chi_D)   
\leq \frac {2g}{K} +O(q^{K/2}).
$$
The same argument using $T^{-}(x)$ instead of $T^{+}(x)$ leads to
$$
S(\th, \chi_D)   
\geq -\frac {2g}{K} +O(q^{K/2}).
$$
Thus
$$
S(\th, \chi_D)   
\ll \frac {g}{K} + q^{K/2}.
$$
Taking  $K=\log_q g$, we obtain \eqref{S bd 1}.
\end{proof}


In the case of the zeta function we expect much more to be true, and the  heuristic arguments  in Farmer, Gonek and Hughes~\cite{FGH} that indicate this also suggest that 
\be\label{S bd conj}\notag
S(\th, \chi_D) =O(\sqrt{g \log g}) \qquad 
\hbox{and}\qquad S(\th, \chi_D)=\Omega(\sqrt{g \log g}).
\ee
To accommodate any eventual improvements in the estimate,   we state the next result in terms of a general upper bound $\Phi(g)$ for $S(\th, \chi_D)$.  

\begin{theorem}\label{T: SK bd}
Suppose that 
$$
S(\th, \chi_D) \ll \Phi(g).
$$
Then for $K\geq (g\log g)/\Phi(g)$ we have
\be\label{S_K bd}
S_K(\th, \chi_D) \ll \Phi(g).
\ee
In particular, 
\be\notag
S_K(\th, \chi_D) \ll \frac{g}{\log_q g}
\ee
when $K\geq \log_q g \cdot\log g$
\end{theorem}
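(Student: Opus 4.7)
The natural starting point is Corollary~\ref{C: S-SK}, which gives
\[
|S(\th,\chi_D) - S_K(\th,\chi_D)| \leq \sum_{j=1}^{2g}\min\!\left(\tfrac12,\; \frac{1}{4\pi K \,\|\th-\th_j\|}\right).
\]
Since the hypothesis gives $S(\th,\chi_D) \ll \Phi(g)$, it suffices to show that the right-hand side is $\ll \Phi(g)$, after which \eqref{S_K bd} follows from the triangle inequality. The particular case is then immediate: with $\Phi(g) = g/\log_q g$, the constraint $K \geq g\log g/\Phi(g)$ becomes $K \geq \log_q g \cdot \log g$.

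To bound the sum, the plan is to control the local density of the angles $\th_j$ near $\th$ using the counting function \eqref{count zeros 2}. Since $N(\th, \chi_D) = 2g\th + S(\th, \chi_D)$ and $S(\th,\chi_D) \ll \Phi(g)$ by hypothesis, the number of $\th_j$ lying in any arc of length $\delta$ around $\th$ is at most $O(g\delta + \Phi(g))$. I would then split the sum at a threshold $\delta$ (to be chosen): for the terms with $\|\th - \th_j\| \leq \delta$ use the trivial bound $1/2$, and for the remaining terms perform a dyadic decomposition of the range $(\delta, 1/2]$ in pieces $(2^{k-1}\delta, 2^k \delta]$.

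Carrying this out, the near terms contribute $\ll g\delta + \Phi(g)$, while the dyadic sum contributes
\[
\frac{1}{4\pi K}\sum_{k\geq 1}\frac{g\cdot 2^k\delta + \Phi(g)}{2^{k-1}\delta} \ll \frac{g\log(1/\delta)}{K} + \frac{\Phi(g)}{K\delta},
\]
where the logarithm comes from summing $k$ up to $\log_2(1/\delta)$, and the geometric series in the second term converges. Choosing $\delta = \Phi(g)/g$ balances the pieces: the near contribution becomes $\ll \Phi(g)$, the first dyadic term is $g\log(g/\Phi(g))/K \leq g\log g/K \ll \Phi(g)$ precisely under the hypothesis $K \geq g\log g/\Phi(g)$, and the last term becomes $g/K \ll \Phi(g)$, which is implied by the same hypothesis (since $g/\Phi(g) \leq g\log g/\Phi(g)$).

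The only real obstacle is bookkeeping: making sure the dyadic sum telescopes correctly and that $\delta$ is chosen to simultaneously kill the four competing terms. No probabilistic or analytic input beyond Corollary~\ref{C: S-SK} and the counting formula \eqref{count zeros 2} is needed—everything follows from the local density estimate provided by the hypothesis on $S(\th, \chi_D)$ itself. This circularity is harmless because $\Phi(g)$ enters both in the hypothesis and in the conclusion with the same implicit constant up to absolute factors.
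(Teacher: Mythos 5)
Your proof is correct and takes essentially the same approach as the paper: both use Corollary~\ref{C: S-SK} together with the density estimate $N(\th,\chi_D)=2g\th+O(\Phi(g))$, and both split at the threshold $\delta=\Delta=\Phi(g)/g$. The only difference is that the paper decomposes the far range into arithmetic shells of fixed width $\Delta$ (so each shell holds $\ll\Phi(g)$ zeros and the harmonic sum $\sum_m 1/m$ produces the logarithm), while you use dyadic shells (so the shell count grows and the logarithm comes from the number of scales); these are equivalent and give the same final bound.
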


\begin{proof}
Set $\Delta=\Phi(g)/g$.
The right hand side of \eqref{ineq S-SK} is
\be\notag 
\begin{split}
\sum_{m=0}^{ [2g/\Delta]+1}
\sum_{\substack{j\\  m \Delta \leq \|\th-\th_j\|  <(m+1)\Delta}}
\min\bigg(\frac12,  \frac{1}{4 \pi K \|\th-\th_j\| }\bigg).
\end{split}
\ee
By \eqref{count zeros},  
$N(\th, \chi_D)=2g\th +O(\Phi(g))$. Hence, for each $m$ the number of terms in the 
inner sum over $j$ is $ 2g\Delta +O(\Phi(g)) \ll \Phi(g)$. The 
 $m=0$ term  therefore contributes $\ll  \frac12 \Phi(g)$, and the remaining terms contribute
\be\notag 
\begin{split}
\ll \frac1K\sum_{m=1}^{  [2g/\Delta]+1}
 \frac{  \Phi(g)}{m\Delta} 
 \ll \frac{\Phi(g)\log (2 g/\Delta)}{K \Delta}=\frac{g\log g}{K} .
\end{split}
\ee
Combining these estimates and taking $K\geq g\log g/\Phi(g)$, we obtain
\eqref{S_K bd}. The last assertion of the theorem is an immediate consequence of 
\eqref{S_K bd} and \eqref{S bd 1}. 
\end{proof}


\section{Discussion of a function related to $\Lu$}
We now introduce an auxiliary function $\Fu$ in order to study $\Lu$. 
For $u\in \mathbb C$
we define
\be\label{F def}
\Fu =\tfrac12\big(\Lu +(qu^2)^g \Lubar \big) .
  \ee 
Note that   $\Fu$ is not holomorphic  although it is harmonic. 

The reason we  introduce $\Fu$ is that, unlike $\Lu$ itself, we can model $\Fu$ on the 
closed disk $|u|\leq q^{-\frac12}$  along the lines of the approximate functional equation 
\eqref{App Fnc Equ},  using the  truncated Euler products $\PKu$.
By Theorems~\ref{L  approx P 1} and~\ref{L  approx P 2}, 
$\PKu$ is a good approximation of $\Lu$ when $K$ is large provided we are not too close to a zero of 
$\Lu$. This is inevitable because $\PKu$ can never vanish. Indeed, Theorem~\ref{L  approx P 2} indicates that 
the closer one is to a zero, the larger $K$ must be to retain a good approximation.  Thus, 
the approximation of $\Lu$ by $\PKu$ is least helpful where we  most need it---at the zeros. 
 
Fortunately, ``knowing'' $\Fu$ is  in many ways  the same  thing as   ``knowing'' $\Lu$.
For example, on the circle $|u|= q^{-\frac12}$, 
\be\label{F=L}
\Fu= \Lu.
\ee  
To see this observe that     $1/qu=\overline{u}$ when $|u|= q^{-\frac12}$, and that by the functional equation, 
$$
\Lu = (qu^2)^g \mathcal{L}(1/qu, \chi_D)= (qu^2)^g \Lubar .
$$
Using this in \eqref{F def}, we obtain \eqref{F=L}.

As another example consider the size of $\Lu$. From 
\eqref{F def} it is immediate that 
$$
\sup_{|u|\leq q^{-\frac12}}|\Fu| \leq \sup_{|u|\leq q^{-\frac12}}|\Lu|.
$$
In fact, however, the two quantities are equal. For   $\Fu$, being  harmonic,   must attain its maximum modulus  on the disc   
on the boundary. 
However,   $\Fu=\Lu$ on the boundary, so
$$
\sup_{|u|\leq q^{-\frac12}}|\Fu| = \sup_{|u|\leq q^{-\frac12}}|\Lu|.
$$

As a final example we prove 
\begin{theorem} The functions $\Fu$ and $\Lu$ have   the same zeros in $\mathbb C$. In particular, the Riemann hypothesis holds for $\Fu$.
\end{theorem}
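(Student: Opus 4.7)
The plan is to combine two facts already established in the paper: the identity $\mathcal{F}(u,\chi_D)=\mathcal{L}(u,\chi_D)$ on the critical circle $|u|=q^{-1/2}$ from \eqref{F=L}, and the Riemann hypothesis for $\mathcal{L}(u,\chi_D)$ itself (Weil's theorem), which confines the zeros of $\mathcal{L}$ to that same circle. Once these are in hand, proving the theorem reduces to pinning the zero set of $\mathcal F$ to the critical circle by a direct modulus computation, after which \eqref{F=L} identifies the two zero sets.

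First I would observe that because the Dirichlet series \eqref{D Series 2} has real coefficients (since $\chi_D(f)\in\{-1,0,1\}$), one has $\mathcal{L}(\overline u,\chi_D) = \overline{\mathcal{L}(u,\chi_D)}$. Substituting this into the definition \eqref{F def}, the equation $\mathcal{F}(u,\chi_D)=0$ becomes
\[
\mathcal{L}(u,\chi_D) = -(qu^2)^g\,\overline{\mathcal{L}(u,\chi_D)}.
\]
Taking moduli yields $|\mathcal{L}(u,\chi_D)|\bigl(1-|qu^2|^g\bigr)=0$, so at every zero of $\mathcal F$ either $\mathcal L(u,\chi_D)=0$ or $|qu^2|^g=1$. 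The first alternative forces $|u|=q^{-1/2}$ by Weil's theorem; the second does so directly, since $|u|^{2g}=q^{-g}$ gives $|u|=q^{-1/2}$. Hence every zero of $\mathcal F$ lies on the critical circle, and by \eqref{F=L} is then also a zero of $\mathcal L$.

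The reverse inclusion is immediate: every zero $u$ of $\mathcal L$ satisfies $|u|=q^{-1/2}$ by Weil, so \eqref{F=L} makes it a zero of $\mathcal F$ as well. This establishes set equality of the zero sets in $\mathbb C$, and the Riemann hypothesis for $\mathcal F$ is inherited from that of $\mathcal L$. I do not anticipate any genuine obstacle, since the whole argument reduces to the modulus computation above; the one point worth a brief comment is that although $\mathcal F$ is merely harmonic and not holomorphic, the analysis of its zero set still goes through because the equation $\mathcal F=0$ yields the clean algebraic condition $|qu^2|^g=1$ used above.
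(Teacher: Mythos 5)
Your proposal is correct and follows essentially the same route as the paper: both arguments rest on the identity $\Fu=\Lu$ on $|u|=q^{-1/2}$, the fact that $\mathcal L(\overline u,\chi_D)=\overline{\mathcal L(u,\chi_D)}$ (so the ratio has modulus one), and a modulus computation from $\Fu=0$ forcing $|qu^2|^g=1$ and hence $|u|=q^{-1/2}$. The only cosmetic difference is that the paper phrases the modulus step as a contradiction after factoring out $\Luzero$, whereas you take absolute values directly and split into the two cases $\mathcal L(u,\chi_D)=0$ or $|qu^2|^g=1$; the substance is identical.
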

\begin{proof}
 Since $\Fu= \Lu$,  on   $|u|= q^{-\frac12}$,  
both functions have the same zeros on this circle. 
Since $\Lu$ has no zeros anywhere else, to complete the proof we must
show that neither does $\Fu$.

Suppose, on the contrary, that $u_0$ is a zero of $\Fu$ with $|u_0|\neq q^{-\frac12}$. Since $\Lu$ has no zeros off the circle $|u|=q^{-\frac12}$, $u_0$ is not a 
zero of $\Lu$. We may therefore   write
\be\label{F=0}\notag
 0=\Fuzero  =\tfrac12\Luzero \bigg(1 + (qu_0^2)^g \frac{\Lubarzero}{\Luzero}\bigg).
\ee
The  only way the term  in parentheses on the right can vanish is if
$$
 \bigg| (qu_0^2)^g \frac{\Lubarzero}{\Luzero} \bigg|= 1.
$$
Now $|\Lubarzero/{\Luzero}|=1$, so this implies that $|u_0|= q^{-\frac12}$, a contradiction.
 \end{proof}
 
 
\section{A Model of $\Fu$}

Having shown that we can deduce information about $\Lu$ from information about  $\Fu$,
we now model   $\Fu$ using the  Euler product truncations $\PKu$.
We set
\be\notag
\FKu= \tfrac12\big(\PKu +(qu^2)^g \PKubar   \big),
\ee
where $\PKu$ is defined in \eqref{P def}.
Since $\PKu$  has no zeros, we see that  $\FKu=0$ if and only if
\be\label{zeros F_K}
1+ (qu^2)^g \frac{\PKubar}{\PKu} =0.
\ee
Since $|\PKubar/\PKu|=1$, this implies that $|u|= q^{-\frac12}$. Thus we have proved \\

\begin{theorem}[The Riemann hypothesis for $\FKu$] All zeros of $\FKu$ lie on 
$|u|= q^{-\frac12}$.
\end{theorem}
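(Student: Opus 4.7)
The plan is to mimic the argument sketched just before the statement: reduce the vanishing of $\FKu$ to an equation of the form $1 + (qu^2)^g \, W(u) = 0$ with $|W(u)|=1$, and then read off $|u|=q^{-\frac12}$ by taking absolute values.

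First I would check the non-vanishing of $\PKu$ and the identity $\PKubar = \overline{\PKu}$. Both come directly from the defining formula \eqref{P def}: since $\PKu = \exp(h(u))$ with $h(u)$ a polynomial, $\PKu$ has no zeros in $\mathbb{C}$, and since the coefficients of $h(u)$ are the real numbers $\Lambda(f)\chi_D(f)/k$ (recall $\Lambda(f)\in\mathbb{Z}_{\geq 0}$ and $\chi_D(f)\in\{-1,0,1\}$), we have $\overline{h(u)}=h(\overline{u})$, hence $\overline{\PKu} = \PKubar$. In particular $|\PKubar/\PKu|=1$ for every $u\in\mathbb{C}$.

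Next, using that $\PKu\neq 0$, I would divide the definition of $\FKu$ by $\PKu$ to obtain equation \eqref{zeros F_K}: $\FKu=0$ if and only if
\[
(qu^2)^g\,\frac{\PKubar}{\PKu} = -1.
\]
Taking absolute values of both sides and applying the unimodularity noted above gives $|qu^2|^g = 1$, i.e.\ $|u|^{2g} = q^{-g}$, and hence $|u|=q^{-\frac12}$. This places every zero on the required circle.

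There is no real obstacle here: the argument is essentially the one already used to locate the zeros of $\Lu$ via the functional equation, with the Euler-product truncation $\PKu$ playing the role of $\Lu$ and the observation $\PKubar = \overline{\PKu}$ replacing the analytic input. The only point that needs a moment of care is the reality of the coefficients of $h(u)$ in $\PKu=\exp(h(u))$, which is why I would record that explicitly before dividing.
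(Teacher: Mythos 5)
Your proof is correct and takes essentially the same approach as the paper: reduce the vanishing of $\FKu$ to $1+(qu^2)^g\,\PKubar/\PKu=0$ using that $\PKu$ never vanishes, then take absolute values. The only difference is that you spell out explicitly why $|\PKubar/\PKu|=1$ (because $\PKu=\exp(h(u))$ with $h$ a real-coefficient polynomial, so $\PKubar=\overline{\PKu}$), a step the paper states without justification.
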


As  $\PKu$  approximates $\Lu$ well in $|u|\leq q^{-\frac12}$ when $K$ is large and $u$ 
is not too close to a zero of $\Lu$, $\FKu$ approximates $\Fu$.  
\begin{theorem}\label{FK approx F 1}
Let  $\displaystyle \s=\frac12+\frac{C\log_q g}{K }$ with $C\geq1$ and
  $K\geq 2C\log g$.  Let $|u|\leq  q^{-\s}$. Then 
\be\label{E: F approx 1}
\Fu=\FKu \Big(1+O\Big(\frac{ 1 }{Cg^{C-1}\log g}\Big)\Big).
\ee
On   the circle $u=q^{-\frac12}e(\th)$, if
 $\displaystyle \min_{1\leq j\leq 2g}\|\th-\th_j\| \geq  {c}/{2g }$
 with $c>0$ and $K  \geq g^2/c$, then
\be\label{E: F approx 2}
\Fu=\FKu \Big(1+O\Big(\frac{ g^2}{cK }\Big)\Big).
\ee
 \end{theorem}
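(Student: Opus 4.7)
The strategy is to derive both estimates directly from Theorems~\ref{L approx P 1} and~\ref{L approx P 2} by subtracting definitions. Because the Dirichlet series for $\Lu$ has integer coefficients and the logarithm of $\PKu$ has real coefficients $\sum_{\deg f=k}\Lambda(f)\chi_D(f)/k$, we have $\Lubar = \overline{\Lu}$ and $\PKubar = \overline{\PKu}$ on all of $\mathbb{C}$; in particular $|\PKubar| = |\PKu|$ everywhere. The key identity is
\begin{equation*}
\Fu - \FKu = \tfrac12(\Lu - \PKu) + \tfrac12(qu^2)^g(\Lubar - \PKubar).
\end{equation*}

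For (\ref{E: F approx 1}), Theorem~\ref{L approx P 1} applied at both $u$ and $\bar u$ (both in the disk $|u|\leq q^{-\sigma}$) gives $\Lu - \PKu = \PKu\, E(u)$ and $\Lubar - \PKubar = \PKubar\,\overline{E(u)}$ with $|E(u)|\ll \epsilon := 1/(Cg^{C-1}\log g)$. The triangle inequality, combined with $|\PKubar|=|\PKu|$, yields $|\Fu - \FKu| \leq \epsilon|\PKu|(1 + |qu^2|^g)$. The reverse triangle inequality applied to $\FKu = \tfrac12(\PKu + (qu^2)^g \PKubar)$ gives $|\FKu| \geq \tfrac12|\PKu|(1 - |qu^2|^g)$. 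A quick computation shows that under the hypotheses $\sigma = \tfrac12 + C\log_q g/K$ and $K\geq 2C\log g$, one has $|qu^2|^g \leq q^{g(1-2\sigma)} = e^{-2gC\log g /K}$, which is bounded away from $1$ (indeed, $\leq e^{-1}$ in the worst case near $K = 2gC\log g$, and $\leq e^{-g}$ at the boundary $K = 2C\log g$). Dividing delivers (\ref{E: F approx 1}).

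For (\ref{E: F approx 2}) on the critical circle $|u|=q^{-1/2}$, the same subtraction with Theorem~\ref{L approx P 2} applied at $u$ and $\bar u$ (both satisfying the separation hypothesis, since the zero set $\{\theta_j\}$ is symmetric under $\theta\mapsto 1-\theta$ by the functional equation) yields $|\Fu - \FKu| \leq \epsilon_0|\PKu|$ with $\epsilon_0 \ll g^2/(cK)$. Here $|qu^2|^g = 1$, so the disk-case lower bound on $|\FKu|$ degenerates. To circumvent this we invoke the identity $\Fu = \Lu$ on $|u|=q^{-1/2}$ recorded at (\ref{F=L}): this gives $|\Fu| = |\Lu| = |\PKu|(1+O(\epsilon_0))$, and consequently
\begin{equation*}
|\FKu| \geq |\Fu| - |\Fu - \FKu| \geq |\PKu|(1 - O(\epsilon_0)) \geq \tfrac12 |\PKu|,
\end{equation*}
provided $\epsilon_0$ is small enough, which is guaranteed by $K\geq g^2/c$. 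Dividing the additive bound by $|\FKu|$ yields (\ref{E: F approx 2}).

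The only genuine obstacle is the potential cancellation between the two summands of $\FKu$ on the critical circle, which would spoil any multiplicative estimate obtained by naive triangle inequality. The resolution is to transfer the multiplicative lower bound $|\Lu| \sim |\PKu|$ (coming from Theorem~\ref{L approx P 2}) to a lower bound on $|\FKu|$ via the serendipitous identity $\Fu = \Lu$ on $|u|=q^{-1/2}$.
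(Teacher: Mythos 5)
Your proposal is substantially \emph{more} careful than the paper's own proof, which is essentially one line: it applies Theorem~\ref{L approx P 1} to both $\Lu$ and $\Lubar$ and simply asserts that the common factor $\big(1+O(\epsilon)\big)$ can be pulled out of the sum $\tfrac12\big(\PKu(1+O(\epsilon)) + (qu^2)^g\PKubar(1+O(\epsilon))\big)$. As you note, the two error factors are different (indeed conjugate), and such a factorization is not automatic: if $\FKu$ is small relative to $|\PKu|$ because of cancellation between the two summands, the relative error can blow up. Identifying this and supplying the reverse-triangle lower bound $|\FKu|\geq\tfrac12|\PKu|(1-|qu^2|^g)$ via $|\PKubar|=|\PKu|$ is exactly the content missing from the paper's proof, and your fix for the boundary case via $\Fu=\Lu$ and Theorem~\ref{L approx P 2} is a clean way to regain a lower bound when $|qu^2|^g=1$ makes the disk argument degenerate.

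There is, however, a gap in your treatment of the interior case that should be flagged. You compute $|qu^2|^g\leq e^{-2gC\log g/K}$ and assert this is ``bounded away from $1$,'' citing $K\leq 2gC\log g$ as the worst case. But the theorem's only hypothesis is $K\geq 2C\log g$ with no upper bound, and as $K\to\infty$ the quantity $e^{-2gC\log g/K}\to 1$, so your lower bound $|\FKu|\geq\tfrac12|\PKu|(1-|qu^2|^g)$ becomes vacuous and the division step fails. In other words, your argument establishes \eqref{E: F approx 1} only for $K\ll gC\log g$; for larger $K$ one would need a separate argument (for instance, a lower bound on $|\FKu|$ near the boundary circle along the lines you used for \eqref{E: F approx 2}, combined with a separation-from-zeros hypothesis). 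To be fair, this restricted range covers every application later in the paper (Theorems~\ref{T: N_K lower bd}, \ref{T: zeros cluster}, and the simplicity theorem all take $K=o(g)$ or $K\asymp g$), and the paper's own proof does not address the cancellation issue at all, so the limitation is really a feature of the statement rather than a defect specific to your argument. But as written your claim that $|qu^2|^g$ is bounded away from $1$ under the stated hypotheses is not correct, and the proof should either add the needed upper bound on $K$ or explain why the conclusion still holds without it.
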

\begin{proof}
By Theorem~\ref{L approx P 1} and the definition of $\Fu$,
\be\notag 
\begin{split}
\Fu= \tfrac12\big(\PKu +(qu^2)^g \PKubar   \big)
\Big(1+O\Big(\frac{ 1 }{Cg^{C-1}\log g}\Big)\Big).
\end{split}
\ee
Equation \eqref{E: F approx 1} now follows from the definition of $\FKu$.
The proof of \eqref{E: F approx 2} is the same except that one uses
Theorem~\ref{L approx P 2}.
\end{proof} 
 \begin{corollary}\label{C: lim F_K} For  $|u|\leq q^{-\frac12}$, $u\neq u_j, \, j=1,2, \ldots, 2g$,
$$
 \lim_{K\to\infty}\FKu\to \Fu.
$$
\end{corollary}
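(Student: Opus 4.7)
The plan is to deduce the corollary immediately from Theorem~\ref{FK approx F 1}, which already supplies multiplicative approximations $\Fu=\FKu(1+o(1))$ in the two regimes $|u|\leq q^{-\s}$ with $\s>\tfrac12$, and $|u|=q^{-1/2}$ away from the $u_j$'s. By the theorem of Section~6, the zeros of $\Fu$ coincide with the $u_j$'s, so for every $u$ permitted in the corollary $\Fu\neq 0$, and inverting the multiplicative estimate is legitimate. It therefore remains only to verify, case by case, that the parameters in Theorem~\ref{FK approx F 1} can be tuned so that the error term tends to $0$ as $K\to\infty$ with $g$ fixed.

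For the interior case, fix $u$ with $|u|=q^{-\s_0}$ and $\s_0>\tfrac12$, and set $\e=\s_0-\tfrac12$. For each large $K$ I would choose an integer $C=C(K)$ with
\[
1\leq C(K)\leq \min\!\left(\frac{\e K}{\log_q g},\,\frac{K}{2\log g}\right), \qquad C(K)\to\infty,
\]
for instance $C(K)=\lceil\log K\rceil$ once $K$ is large enough that both upper bounds exceed $\log K$. The first inequality secures $\s=\tfrac12+C(K)\log_q g/K\leq \s_0$ and hence $|u|\leq q^{-\s}$; the second secures $K\geq 2C(K)\log g$. Theorem~\ref{FK approx F 1} then applies, and its error $O\bigl(1/(C(K)\,g^{C(K)-1}\log g)\bigr)$ tends to $0$. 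For the boundary case, write $u=q^{-1/2}e(\th)$ with $\th\neq \th_j$ for all $j$, set $\d=\min_{1\leq j\leq 2g}\|\th-\th_j\|>0$, and take $c=2g\d$; then $\|\th-\th_j\|\geq c/(2g)$ by construction, and $K\geq g^2/c=g/(2\d)$ holds for all sufficiently large $K$, so \eqref{E: F approx 2} yields $\Fu=\FKu(1+O(g/(2\d K)))$ with error $o(1)$. In each case, dividing by $1+o(1)$ gives $\FKu\to\Fu$.

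There is no substantive obstacle. The only point requiring any care is the simultaneous tuning of $C(K)$ in the interior case: $C$ must grow to infinity in order to push $g^{C-1}$ to infinity for fixed $g$ (the only mechanism by which the first error in Theorem~\ref{FK approx F 1} shrinks), yet must stay bounded above by both $K/(2\log g)$ and $\e K/\log_q g$ so that the hypotheses of the theorem remain in force. Any sequence $C(K)\to\infty$ satisfying $C(K)=o(K/\log g)$ meets all of these constraints.
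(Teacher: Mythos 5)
Your proof is correct and follows what is clearly the paper's intended derivation: the corollary is an immediate consequence of Theorem~\ref{FK approx F 1} once one notes that, with $g$ and $u$ fixed, the parameters $C$ (interior case) and $c$ (boundary case) may be chosen so that the hypotheses remain satisfied while the error terms tend to zero as $K\to\infty$, and your tuning $C(K)\to\infty$ subject to $C(K)=o(K/\log g)$ and $C(K)\log_q g/K\le \s_0-\tfrac12$, respectively $c=2g\,\min_j\|\th-\th_j\|$, accomplishes exactly this. The only microscopic omission is that writing $|u|=q^{-\s_0}$ excludes $u=0$, but that case is trivial since $\mathcal F(0,\chi_D)=\mathcal F_K(0,\chi_D)=\tfrac12$ for every $K$.
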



\section{The zeros of $\FKu$}

Since $\FKu$ is a good approximation of $\Fu$, one  wonders  whether their zeros  are close to one another or whether there are other connections between them. We have seen that both functions satisfy the Riemann hypothesis, so that is a good start.
As previously, we write $u_j =q^{-\frac12}e(\th_j) , j=1, 2, \ldots, 2g$,  for the 
zeros of $\Lu$, where $0\leq \th_1\leq \th_2\leq  \ldots \leq \th_{2g}<1$. 
We denote the zeros of  $\FKu$
by  $v_j=q^{-\frac12}e(\phi_j), \, j=1, 2, \ldots $,   with
$0\leq \phi_1\leq \phi_2\leq  \ldots   <1$, leaving open for now the question of their number.

By \eqref{zeros F_K},
 a necessary and sufficient condition for $v_j=q^{-\frac12}e(\phi_j)$ 
 to be a zero of $\FKu$ is that 
\be\label{}\notag
1+ (q v_j^2)^g \frac{P_K(\overline{v}_j,\chi_D)}{P_K(v_j,\chi_D)} =0.
\ee
This is equivalent to 
\be\notag
e^{4\pi i g\phi_j +2i\arg P_K(v_j,\chi_D)} =-1,
\ee
or
\be\notag
2  g\phi_j +\frac1\pi \arg P_K(v_j,\chi_D)=2  g\phi_j +S_K(\phi_j,\chi_D) \equiv  \tfrac12 \pmod{1}.
\ee
Now as $\phi$ varies from $0$ to $1$,
 the graph of the continuous  curve  
\be\label{def f_K}
f_K(\phi) = 2 g\phi + S_K ( \phi, \chi_D)
\ee
traverses a vertical distance  greater than or equal to $f_K(1)- f_K(0) =2g-0=2g$.
Thus it intersects at least $2g$ of the horizontal lines $y=  k+\frac12,\, k \in \mathbb Z$, 
possibly more than once.
We let these values be $\phi_1, \phi_2, \ldots $
in increasing order. Then the points $v_j=q^{-1/2}e(\phi_j)$ are  the  \emph{distinct} 
zeros of $\FKu$.
Thus, $\Fu$ has $2g$ zeros, counting multiplicities, and   $\FKu$ has at least $2g$  distinct zeros.  
Similarly, we see that the number of    zeros of $\FKu$ on any arc 
$u=q^{-\frac12}e(\phi),\, 0\leq \phi\leq \th $, where   $0\leq  \th<1$, is   
\be\notag 
\begin{split}
N_K (\th, \chi_D) \geq &2g\th + S_K ( \th, \chi_D) +O(1).
\end{split}
\ee
 
 Combining this with Theorem~\ref{T: SK bd} we obtain
 \begin{theorem}\label{T: N_K lower bd}
 Let $K\geq g\log g/\Phi(g)$. Then
\be\notag 
\begin{split}
N_K (\th, \chi_D) \geq  2g\th + O(\Phi(g)).
\end{split}
\ee
 \end{theorem}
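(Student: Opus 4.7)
The plan is to combine two ingredients that are already in place. Immediately before the statement we derived
$$N_K(\th, \chi_D) \geq 2g\th + S_K(\th, \chi_D) + O(1)$$
for every $\th \in [0,1)$, by counting crossings of the continuous curve $f_K(\phi) = 2g\phi + S_K(\phi, \chi_D)$ with the horizontal lines $y = k + \tfrac12$, $k \in \mathbb Z$, on the interval $[0,\th]$; these crossings parametrize precisely the distinct zeros $v_j = q^{-\frac12}e(\phi_j)$ of $\FKu$ on the arc $u = q^{-\frac12}e(\phi)$, $0 \leq \phi \leq \th$. So this inequality is a free gift from the preceding paragraph and requires no additional argument.

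To convert it into the claim of the theorem I would simply substitute the upper bound on $|S_K(\th,\chi_D)|$ furnished by Theorem~\ref{T: SK bd}. That theorem says that, under the hypothesis $S(\th,\chi_D) \ll \Phi(g)$, the condition $K \geq g\log g/\Phi(g)$ implies $|S_K(\th,\chi_D)| \ll \Phi(g)$. Inserting this into the previous display gives
$$N_K(\th, \chi_D) \geq 2g\th - |S_K(\th, \chi_D)| + O(1) = 2g\th + O(\Phi(g)),$$
which is exactly the stated bound.

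There is no genuine obstacle; the proof is a one-line combination of the two previously established ingredients, and it is not clear that anything more is expected. The only implicit point worth flagging is that $\Phi(g)$ must be a legitimate majorant for $S(\th,\chi_D)$ in order for the hypothesis of Theorem~\ref{T: SK bd} to be in force. Unconditionally one may take $\Phi(g) = g/\log_q g$ by \eqref{S bd 1}, in which case $K \geq \log_q g \cdot \log g$ suffices and one recovers a concrete instance of the theorem; any future improvement of \eqref{S bd 1} would automatically yield a sharper conclusion here via the same template.
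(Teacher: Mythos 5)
Your proof is correct and is essentially identical to the paper's: the paper, too, simply combines the crossing-count lower bound $N_K(\th,\chi_D) \geq 2g\th + S_K(\th,\chi_D) + O(1)$ from the preceding paragraph with the bound $S_K(\th,\chi_D) \ll \Phi(g)$ of Theorem~\ref{T: SK bd}. Your closing remark about $\Phi(g)$ needing to be a valid majorant of $S(\th,\chi_D)$, with the unconditional choice $\Phi(g)=g/\log_q g$, matches the paper's implicit standing hypothesis and is a sensible point to flag.
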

 
Next we show  that  the zeros of $\FKu$ are close to those of $\Fu$ when $K$ is  large.
We saw that $\FKu$ has a zero at $u=q^{-\frac12}e(\th)$ if and only if 
\be\label{f_K cong}
f_K(\th)=2g\th+S_K(\th, \chi_D) \equiv  \tfrac12 \pmod{1}.
\ee
Thus
\be\label{f_K formula 1}\notag
\begin{split}
f_K(\th)=&2g\th+S(\th, \chi_D) +(S_K(\th, \chi_D) -S(\th, \chi_D)\\
=&N(\th, \chi_D)+(S_K(\th, \chi_D) -S(\th, \chi_D) ).
\end{split}
\ee
Suppose now that $\th_i$ and $\th_{i+1}$ are arguments corresponding to distinct consecutive zeros of $\Fu$, and let $0<\Delta<\frac12 (\th_{i+1}-\th_i)$. Then on the interval $I =[\th_i+\Delta, \th_{i+1}-\Delta]$, $N(\th, \chi_D)$ is an integer. Thus,
if $|S_K(\th, \chi_D) -S(\th, \chi_D)|<\frac12$ on $ I$, then
\eqref{f_K cong} cannot hold. By  \eqref{ineq S-SK}, if $\th\in I$ we have
 
 \be\notag
\begin{split}
|S(\th,\chi_D)  -S_K(\th,\chi_D)| 
\leq& \sum_{j=1}^{2g} \min\bigg(\frac12,  \frac{1}{ 4\pi K \| \th-\th_j \| }\bigg) \\
\leq& \frac{g}{2\pi  K\Delta} .
\end{split}
\ee
 Therefore,   if $K>g/\pi\Delta$, 
then $S_K(\th, \chi_D) -S(\th, \chi_D)<\frac12$ on $ I$.
We have thus proved
\begin{theorem}\label{T: zeros cluster}
Let $\th_i$ and $\th_{i+1}$  correspond  to distinct consecutive zeros of $\Fu$, and let $0<\Delta<\frac12 (\th_{i+1}-\th_i)$. Then if  $K>g/\pi\Delta$,  $\FKu$ has no zero on the 
interval $  I =[\th_i+\Delta, \th_{i+1}-\Delta]$. In particular, the zeros of $\FKu$ cluster around the zeros of $\Fu$  as $K\to\infty$.
\end{theorem}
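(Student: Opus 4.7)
The plan is to use the characterization \eqref{f_K cong} of zeros of $\FKu$ that has already been derived, combined with the separation bound from Corollary~\ref{C: S-SK}, to force $\FKu$ away from vanishing on the interior interval $I=[\th_i+\Delta, \th_{i+1}-\Delta]$. The core idea is that on this interval $f_K(\th)$ is pinned very close to a constant integer value, and hence cannot hit a half-integer.

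First I would combine \eqref{f_K cong} with \eqref{count zeros 2} to rewrite
\[
f_K(\th) = N(\th,\chi_D) + \big(S_K(\th,\chi_D)-S(\th,\chi_D)\big),
\]
so that $u=q^{-1/2}e(\th)$ is a zero of $\FKu$ only if $S_K(\th,\chi_D)-S(\th,\chi_D)\equiv \tfrac12\pmod 1$. Since $\th_i$ and $\th_{i+1}$ are the arguments of consecutive \emph{distinct} zeros of $\Lu$, no $\th_j$ lies in the open arc $(\th_i,\th_{i+1})$, and consequently $N(\th,\chi_D)$ is constant and integer-valued on all of $(\th_i,\th_{i+1})$. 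It therefore suffices to prove $|S_K(\th,\chi_D)-S(\th,\chi_D)|<\tfrac12$ throughout $I$.

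Second, I would invoke Corollary~\ref{C: S-SK}. For $\th\in I$ and any $\th_j$, the exclusion of $\th_j$ from $(\th_i,\th_{i+1})$ together with the constraint $\Delta<\tfrac12(\th_{i+1}-\th_i)$ gives the circle-distance lower bound $\|\th-\th_j\|\geq \Delta$. Each of the $2g$ summands in \eqref{ineq S-SK} is then bounded by $1/(4\pi K\Delta)$, yielding
\[
|S_K(\th,\chi_D)-S(\th,\chi_D)| \leq \frac{2g}{4\pi K\Delta} = \frac{g}{2\pi K\Delta}.
\]
The hypothesis $K>g/(\pi\Delta)$ makes this strictly less than $\tfrac12$, so $f_K(\th)$ stays within distance $<\tfrac12$ of a fixed integer on $I$, and the congruence $f_K(\th)\equiv\tfrac12\pmod 1$ cannot be satisfied. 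The clustering consequence then follows formally: given $\e>0$, taking $\Delta=\e$ shows that once $K>g/(\pi\e)$ every zero of $\FKu$ lies in $\bigcup_{i}[\th_i-\e,\th_i+\e]$, so every accumulation point (as $K\to\infty$) of a sequence of zeros of $\FKu$ must be a zero of $\Fu$.

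The main subtle point to verify carefully is the circle-distance bound $\|\th-\th_j\|\geq \Delta$ for \emph{all} $j$, not merely $j\in\{i,i+1\}$. Because $I$ sits strictly interior to the gap $(\th_i,\th_{i+1})$ with margin $\Delta$ on each side and $\Delta<\tfrac12(\th_{i+1}-\th_i)$, any $\th_j$ outside this gap is at ordinary distance $\geq\Delta$ from every $\th\in I$, and the wrap-around distance is at least as large. Once that minor case analysis is in hand, everything else is bookkeeping built directly on the identities and estimates assembled in the preceding sections.
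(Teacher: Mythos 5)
Your proposal is correct and follows essentially the same route as the paper's proof: rewrite $f_K(\th)=N(\th,\chi_D)+\big(S_K(\th,\chi_D)-S(\th,\chi_D)\big)$, observe $N$ is a constant integer on $(\th_i,\th_{i+1})$, and then apply Corollary~\ref{C: S-SK} with the bound $\|\th-\th_j\|\geq\Delta$ on $I$ to get $|S-S_K|\leq g/(2\pi K\Delta)<\tfrac12$ when $K>g/(\pi\Delta)$. Your explicit verification of the circle-distance bound $\|\th-\th_j\|\geq\Delta$ for all $j$ is a small but welcome addition of rigor that the paper leaves implicit.
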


Our last theorem concerns the  simplicity of zeros of $\FKu$.
We may write
\be\label{F_K formula}\notag
\mathcal F_K(q^{-\frac12}e(\th), \chi_D) 
=\tfrac12P_K(q^{-\frac12}e(\th), \chi_D)(1+e(f_K(\th)))
\ee
with $f_K(\th)$ defined in \eqref{def f_K}. Recall that $\th$ corresponds to a zero of $\FKu$ 
if and only if \eqref{f_K cong} holds. This zero will be simple if and only if
\be\notag
\begin{split}
\frac{d}{d\th} \mathcal F_K(q^{-\frac12}e(\th), \chi_D) \neq 0,
\end{split}
\ee
which is easily seen to be equivalent to
\be\notag
\begin{split}
   \frac{df_K(\th)}{d\th}\neq 0.
\end{split}
\ee
By  \eqref{def f_K} and \eqref{arg P 3},
\be\notag 
\begin{split}
   \frac{df_K(\th)}{d\th} =2g+2
\sum_{j=1}^{2g} \bigg( \sum_{k=1}^{ K}\;  \cos(2\pi(\th-\th_j) k) 
  \bigg) .
\end{split}
\ee
The right hand side is a trigonometric polynomial in $\th$ of degree $K$ so it has at 
most $2K$ zeros. Now, by Theorem~\ref{T: N_K lower bd},  if $K\geq g\log g /\Phi(g)$, then   $\FKu$ has $\geq 2g(1+o(1))$ zeros. Thus, if we also have $K=o(g)$, then at most $o(g)$ of these will be multiple. Taking $\Phi(g)= g/\log_q g$, we deduce 

\begin{theorem} If \  $\log g \log_q g  \leq K=o(g)$, then almost all zeros of $\FKu$ are simple.
\end{theorem}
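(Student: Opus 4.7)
The plan is to count the multiple zeros of $\FKu$ by exploiting the fact that the derivative of the natural counting parameter is a low-degree trigonometric polynomial, while the total number of zeros grows like $2g$.

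First I would use the identity
\[
\mathcal F_K(q^{-\frac12}e(\th),\chi_D) = \tfrac12 P_K(q^{-\frac12}e(\th),\chi_D)\bigl(1+e(f_K(\th))\bigr),
\]
already recorded just above the theorem, where $f_K(\th) = 2g\th + S_K(\th,\chi_D)$. Since $P_K$ never vanishes, a point $\th$ parametrizes a zero of $\FKu$ iff $f_K(\th)\equiv \tfrac12 \pmod 1$, and such a zero is simple exactly when $f_K'(\th)\neq 0$. Next, I would invoke the explicit Fourier series for $S_K$ from Theorem~\ref{T: SK} to compute
\[
f_K'(\th) = 2g + 2\sum_{j=1}^{2g}\sum_{k=1}^{K} \cos\bigl(2\pi(\th-\th_j)k\bigr),
\]
which is a real trigonometric polynomial in $\th$ of degree at most $K$. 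Hence it has at most $2K$ zeros on $[0,1)$, so at most $2K$ of the zeros of $\FKu$ can fail to be simple.

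For the lower bound on the total number of zeros, I would apply Theorem~\ref{T: N_K lower bd} at $\th=1$ with the bound $\Phi(g)=g/\log_q g$ furnished by Theorem~\ref{T: SK bd}. The hypothesis $K\geq \log g \cdot \log_q g$ is exactly the condition $K\geq g\log g/\Phi(g)$ needed to apply that theorem, giving
\[
N_K(1,\chi_D) \geq 2g + O\!\left(\frac{g}{\log_q g}\right) = 2g\bigl(1+o(1)\bigr).
\]

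Combining the two bounds, the proportion of multiple zeros is at most $2K/(2g(1+o(1)))$, which is $o(1)$ precisely because of the assumption $K=o(g)$. This yields the claim. I do not anticipate a serious obstacle, since every ingredient has already been established; the only delicate point is bookkeeping, ensuring the same list of zeros (with multiplicities) is used on both sides of the comparison, which is guaranteed by the $f_K\equiv\tfrac12\pmod 1$ characterization and the degree bound on $f_K'$.
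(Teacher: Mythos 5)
Your argument matches the paper's proof almost line for line: the same factorization of $\FKu$ through $f_K(\th)$, the same characterization of simple zeros via $f_K'(\th)\neq 0$, the same degree-$K$ trigonometric-polynomial bound of $2K$ on the critical points, and the same application of Theorem~\ref{T: N_K lower bd} with $\Phi(g)=g/\log_q g$ to get $2g(1+o(1))$ zeros. Correct, and essentially the same approach as the paper.
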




{\frenchspacing

}

\end{document}